\newcommand{\n}{\noindent}
\newcommand{\bb}[1]{\mathbb{#1}}
\newcommand{\cl}[1]{\mathcal{#1}}
\newcommand{\ovl}{\overline}
\theoremstyle{plain}
\newtheorem{thm}{Theorem}[section]
\newtheorem{lem}[thm]{Lemma}
\newtheorem{pro}[thm]{Proposition}
\newtheorem{cor}[thm]{Corollary}
\theoremstyle{definition}
\newtheorem{dfn}[thm]{Definition}
\theoremstyle{remark}
\newtheorem{rem}[thm]{Remark}
\def\tilde{\widetilde}
\renewcommand{\tilde}{\widetilde}
\def\C{\bb C}
\def\F{\bb F}
\def\F{\bb F}
\def\phi{\varphi}
\def\n{\noindent}
\def\nl{\nolimits}
\begin{document}

\title{On a Characterization of the Weak Expectation Property (WEP)}

\author{by\\
Gilles Pisier \\
Texas A\&M University\\
College Station, TX 77843, U. S. A.}

 \maketitle
\begin{abstract}  
We give a detailed proof  of a 
new characterization  of the Weak Expectation Property (WEP) announced by Haagerup 
in the 1990's but unavailable (in any form) till now. 
Our main result is motivated by
a well known conjecture of Kirchberg, which is equivalent to the Connes embedding problem. We review the basic relevant facts connecting 
our main theorem with the latter conjecture, along the lines of our forthcoming lecture notes volume on the Connes-Kirchberg problem.
 \end{abstract}
 
 The Weak Expectation Property (WEP), originally introduced by Lance
 \cite{Lan} has drawn a lot of extra attention recently because
 of Kirchberg's work \cite{Kir}
 and in particular his proof that 
 the Connes embedding problem
 is equivalent to the assertion that the $C^*$-algebra
 of the free group  $\F_\infty$ (or $\F_2$) has the WEP (see \S \ref{nuc}).

 This paper is extracted from the draft of our forthcoming lecture notes
 \cite{P5} devoted to tensor products of $C^*$-algebras, and especially
 to the Weak Expectation Property (WEP)
 and the Local Lifting Property (LLP). In particular
  we prove there in full details  the equivalence of the Kirchberg conjecture
  with the Connes embedding problem,  the Tsirelson conjecture and several other conjectures.
  A draft of that book
  can be found on the following URL:
  
  \url{https://www.math.tamu.edu/~pisier/TPCOS.pdf}
  
  (i.e. the author's web page at Texas A\&M University followed by TPCOS.pdf)
  
 While our lecture notes are essentially self-contained,
 the present  text   has been edited 
 for readers already familiar with operator algebra theory by removing
 unnecessary details.
 
In this note we give a 
new characterization of the WEP, announced by Haagerup 
(see Remark \ref{x4}) but unpublished.
Contrary to other similar situations, it seems that no manuscript
  has been circulated. Nevertheless, we suspect that the proof
  below is close to what Haagerup had in mind. In any case
  his previous work from \cite{Ha} plays a crucial role.

 \n  We define the WEP for a $C^*$-algebra $A$
 by the equality    $A\otimes_{\min} {\mathscr{C}}=A\otimes_{\max} {\mathscr{C}}$, where
 ${\mathscr{C}}$ is the full (or maximal) $C^*$-algebra of the free group
 $\F_\infty$. 
 Kirchberg  showed that this property is equivalent to a weak form of extension property
 (a sort of weakening of injectivity), that had been considered  by Lance. 
 More precisely, assuming $A\subset B(H)$, Kirchberg showed that
 $A\otimes_{\min} {\mathscr{C}}=A\otimes_{\max} {\mathscr{C}}$
 holds if and only if
   any $*$-homomorphism
 $u: A \to M$ into a von Neumann algebra $M$
 extends to a contractive c.p. map $\tilde u: B(H) \to M$.
 Let $A\subset B(H)$ be a $C^*$-subalgebra.
  Let $\ovl{A}^\sigma$ denote the weak* closure of $A$ in $B(H)$,
 equal to $A''$ (if $A$ is unital) by the bicommutant theorem. Following Lance \cite{Lan},
 a unital c.p.  mapping $T: B(H) \to \ovl{A}^\sigma$ is called a weak expectation
  \index{weak expectation}
if $T(a)=a$ for any $a\in A$.

  \begin{rem}\label{lanc} 
Lance's original definition of the WEP for a $C^*$-algebra is  different but
       equivalent to ours.   Lance  \cite{Lan} says that a $*$-homomorphism
 $\pi: A \to B(H)$ has the WEP
 if
 the von Neumann algebra it generates,
 i.e.  the weak* closure $\ovl{\pi(A)}^\sigma$,  admits a weak expectation.
 He then says that $A$ has the WEP
 if every \emph{faithful} $\pi$ has the WEP. Concerning the relevance of the latter faithfulness
 assumption, see \cite{Bla1,Bla2,BrDy}.
 \end{rem}

It is convenient
to enlarge Lance's concept, as follows.
\begin{dfn}\label{dgwep}
 Let $A\subset B$ be a $C^*$-subalgebra of another one. A linear  mapping $V: B \to A^{**}$  
will be  called a generalized weak expectation if $\|V\|\le 1$ and
 \index{generalized weak expectation} $V(a)=a$ for any $a\in A$.
 When such a $V$ exists, we will say that the inclusion $A\subset B$
 admits a generalized weak expectation.
  \end{dfn}
\begin{rem}\label{x11} Let $V$ be as above in Definition \ref{dgwep}.
Let $P=\ddot V: B^{**} \to A^{**}$.
Observing that $\ddot V$  is continuous with respect to
$\sigma(B^{**},B^{*})$ and $\sigma(A^{**},A^{*})$, one easily checks that
$P$ is a contractive linear projection onto $A^{**}$. 
\\
Conversely, if there is a contractive projection $P: B^{**} \to A^{**}$,
then $V=P_{|B}: B \to A^{**}$ is a  generalized weak expectation.\\
By  Tomiyama's classical theorem \cite{[To2]}, 
 the contractive projection $P=\ddot V$ is   completely positive and
completely contractive. 
Note $\ddot V_{|B}=V$. Thus any generalized weak expectation
$V: B \to A^{**}$ is automatically completely positive and
completely contractive.
 \end{rem}
Let $(U_j)_{j\ge 1}$ denote the free unitary generators of  $\mathscr{C}$.
For notational convenience, we set $U_0=1$. We then define
\begin{equation}\label{x10}
E_n={\rm span}[U_j\mid 0\le j\le n-1].\end{equation}
One form of our main result is as follows:
\begin{thm}\label{x5}
Let $A\subset B(H)$ be a 
$C^*$-subalgebra.
The following 
 are equivalent:
\begin{itemize}
\item[{\rm (i)}] For any $n\ge 1$ and any $t\in E_n \otimes  A$
we have $$\|  t\|_{ \mathscr{C}\otimes_{\max} A}=\|  t\|_{ \mathscr{C}\otimes_{\min} A}.$$
\item[{\rm (ii)}]  
For any   $t\in \mathscr{C} \otimes  A$
we have $$\|  t\|_{ \mathscr{C}\otimes_{\max} A}=\|  t\|_{ \mathscr{C}\otimes_{\min} A}.$$
In other words (with our definition) $A$ has the WEP.
\end{itemize}
\end{thm}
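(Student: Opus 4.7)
The direction (ii) $\Rightarrow$ (i) is immediate, since $E_n \otimes A$ sits inside $\mathscr{C} \otimes A$ and both the min- and max-norms on the larger space restrict to the corresponding norms on the subspace.

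For (i) $\Rightarrow$ (ii), the plan is to use (i) to produce a generalized weak expectation in the sense of Definition \ref{dgwep} for an arbitrary inclusion $A \subset B(H)$, and then invoke Kirchberg's characterization of the WEP recalled in the introduction to conclude that $A \otimes_{\min} \mathscr{C} = A \otimes_{\max} \mathscr{C}$. Fix therefore a faithful embedding $A \subset B(H)$; the goal is a contractive linear map $V: B(H) \to A^{**}$ with $V(a) = a$ for all $a \in A$, from which Remark \ref{x11} together with Tomiyama's theorem delivers complete positivity and complete contractivity automatically.

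The central idea is to exploit the universal lifting property of $\mathscr{C}$. Pick a family $(u_\alpha)_{\alpha \in S}$ of unitaries generating $B(H)$ as a $C^*$-algebra and let $q: \mathscr{C}' \to B(H)$ be the canonical surjection from $\mathscr{C}' = C^*(\F_S)$ sending the $\alpha$-th free generator $U_\alpha$ to $u_\alpha$. Since (i) involves only finitely many free unitaries at a time, the hypothesis transfers from $\mathscr{C}$ to $\mathscr{C}'$. Any element $t = \sum_\alpha u_\alpha \otimes a_\alpha$ of the algebraic tensor product $B(H) \otimes A$ (with finitely many nonzero terms) admits the lift $\tilde t = \sum_\alpha U_\alpha \otimes a_\alpha \in E_n \otimes A$ for some $n$, and by (i) we have $\|\tilde t\|_{\max} = \|\tilde t\|_{\min}$ in $\mathscr{C}' \otimes A$. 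A Hahn--Banach separation argument in the spirit of Lance's original construction of a weak expectation, applied to these lifted equalities, should then produce the desired contractive $V$.

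The main obstacle is that $\|\tilde t\|_{\min}^{\mathscr{C}' \otimes A}$ may be strictly larger than $\|t\|_{\min}^{B(H) \otimes A}$: the map $q$ need not be injective on the sub-$C^*$-algebras generated by special families of unitaries (e.g.\ when several $u_\alpha$ happen to be powers of a common unitary). Bridging this gap is presumably where Haagerup's work \cite{Ha} enters, through an intrinsic description or a sharp row/column estimate for $\|\sum U_j \otimes a_j\|_{\mathscr{C} \otimes_{\min} A}$ depending only on the $a_j$'s and not on the particular free-unitary representation. With such a formula available, the estimate should transfer cleanly back from $\mathscr{C}'$ to $B(H) \otimes A$, and the contractive extension $V$ then follows.
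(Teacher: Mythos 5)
Your direction (ii) $\Rightarrow$ (i) and your endgame (a contractive $V:B(H)\to A^{**}$ fixing $A$ yields the WEP via Remark \ref{x11} and Theorem \ref{kk1}(ii)) are correct and match the paper. The problem is the middle step, and the ``main obstacle'' you flag is not a technicality awaiting a sharp formula from \cite{Ha}: it is a sign that the route is impassable. If the lifted equality $\|\tilde t\|_{\mathscr{C}'\otimes_{\max}A}=\|\tilde t\|_{\mathscr{C}'\otimes_{\min}A}$ could be transferred ``cleanly back'' to $B(H)\otimes A$ as you propose, you would conclude that $(B(H),A)$ is a nuclear pair for every $A\subset B(H)$ satisfying (i); but $A=\mathscr{B}=B(\ell_2)$ satisfies (i) by Kirchberg's Theorem \ref{kir}, while $(\mathscr{B},\mathscr{B})$ is \emph{not} nuclear by \cite{JP}. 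Concretely, by \eqref{eq81} the quantity $\|\sum U_\alpha\otimes a_\alpha\|_{\mathscr{C}'\otimes_{\min}A}$ equals $\|T\|_{cb}$ for the associated map $T:\ell_\infty^n\to A$, i.e.\ the supremum of $\|\sum u_\alpha\otimes a_\alpha\|_{\min}$ over \emph{all} unitary substitutions; it already depends only on the $a_\alpha$'s, and it is in general strictly larger than the norm downstairs for your fixed generators (take all $u_\alpha=1$). No intrinsic description can close that gap.

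The second structural problem is that your free lift targets the wrong algebra: to manufacture a weak expectation by ``the trick'' one needs free unitaries mapped onto unitary generators of the \emph{commutant} of (the weak* closure of) $A$, not onto generators of $B(H)$, because only then is the product map $e\otimes m\mapsto\sigma(e)m$ a $*$-homomorphism for the max-norm --- which is where hypothesis (i), rewritten via \eqref{impu}, \eqref{eq81} and \eqref{d46} as $\|T\|_{D(\ell_\infty^n,A)}=\|T\|_{cb}$ for all $T:\ell_\infty^n\to A$, actually gets used. Even then, several genuinely nontrivial steps remain that your sketch does not touch: the hypothesis concerns $A$ but the expectation must land in $A^{**}$, and passing to the inclusion $A^{**}\subset B(H)^{**}$ requires the identity $D(\ell_\infty^n,A^{**})=D(\ell_\infty^n,A)^{**}$ of Theorem \ref{da50}; one must then reduce to a $\sigma$-finite $M=A^{**}$ with a cyclic vector (Proposition \ref{ro4}), use Smith's Theorem \ref{smith1} to upgrade contractivity of the product map on $E\otimes_{\ovl{\max}}M$ to complete contractivity, apply Arveson's extension theorem, and run the multiplicative-domain argument (Theorem \ref{md}) to see that $b\mapsto\tilde T(1\otimes b)$ lands in $M$. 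Your ``Hahn--Banach separation in the spirit of Lance'' is a placeholder for all of this, so the proposal is a statement of intent rather than a proof.
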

\begin{rem}\label{x2} The property (i) means that the inclusion
$E_n \otimes_{\min} A \to { \mathscr{C}\otimes_{\max} A}$
is isometric for all $n\ge 1$. By   \cite[Th. 1]{P}
$A$ has the WEP iff  the same inclusion is
\emph{completely} isometric for all $n\ge 1$ (or just for $n=3$), which means that
(ii) still holds when $A \subset  B(H)$ is replaced by $M_N(A)\subset M_N(B(H))$ for any $N\ge 1$.
\end{rem}

We denote by $D(A,B)$ the normed space of decomposable maps
from $A$ to $B$ (see \S \ref{dec} below). We denote by 
$\ell_\infty^n$ the $n$-dimensional commutative $C^*$-algebra
that is just $\C^n$ equipped with the sup-norm and pointwise product.
In terms of decomposable maps, the preceding theorem implies:

\begin{cor} \label{x1} 
Let $i: A\to B(H)$ be the inclusion mapping.
The following 
 are equivalent:
\begin{itemize}
\item[{\rm (i)}] For any $n\ge 1$ and any $T:\ell_\infty^n \to A$
we have $$\|  T\|_{D(\ell_\infty^n ,A)}=\|  T\|_{cb}.$$
\item[{\rm (i)'}] For any $n\ge 1$ and any $T:\ell_\infty^n \to A$
we have $$\|  T\|_{D(\ell_\infty^n ,A)}=\|  iT\|_{D(\ell_\infty^n ,B(H))}.$$
\item[{\rm (ii)}] $A$ has the WEP.
\end{itemize}
\end{cor}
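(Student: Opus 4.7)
The plan is to read both $\|T\|_{cb}$ and $\|T\|_{D(\ell_\infty^n,A)}$, for $T\colon\ell_\infty^n\to A$ with $T(e_j)=a_j$, as tensor-product norms of the element
$$t_T=\sum_{j=1}^n U_j\otimes a_j\in\mathscr{C}\otimes A,$$
and then to invoke Theorem~\ref{x5}. Concretely, the two identifications I aim for are
$$\|T\|_{cb}=\|t_T\|_{\mathscr{C}\otimes_{\min}A}\qquad\text{and}\qquad \|T\|_{D(\ell_\infty^n,A)}=\|t_T\|_{\mathscr{C}\otimes_{\max}A}.$$
Both rest on the same mechanism. The cb-norm of $T$ unwinds as $\sup_N\sup_{\|x_j\|\le1}\|\sum a_j\otimes x_j\|$ in $A\otimes_{\min}M_N$; Russo--Dye in $M_N$ replaces matrix contractions by matrix unitaries, and the universal property of $\mathscr{C}$ parametrizes such unitary tuples by $\ast$-homomorphisms $\pi\colon\mathscr{C}\to M_N$, giving $\|T\|_{cb}=\|t_T\|_{\min}$. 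For the decomposable norm I would invoke Haagerup's characterization $\|T\|_{dec}=\sup_B\|T\otimes\mathrm{id}_B\|_{\ell_\infty^n\otimes_{\max}B\to A\otimes_{\max}B}$. Since $\ell_\infty^n$ is nuclear, the unit ball of $\ell_\infty^n\otimes_{\max}B=\ell_\infty(n;B)$ consists of $n$-tuples of contractions in $B$, so the very same Russo--Dye plus universality argument, now carried out with the max tensor product, yields $\|T\|_{D(\ell_\infty^n,A)}=\|t_T\|_{\max}$.

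With both identifications in hand, (i)$\Leftrightarrow$(ii) follows directly from Theorem~\ref{x5}. Condition (i) translates to ``$\|t\|_{\min}=\|t\|_{\max}$ on $\mathrm{span}(U_1,\ldots,U_n)\otimes A$ for every $n$,'' while Theorem~\ref{x5}(i) is the analogous equality on $E_m\otimes A$ for every $m$. These conditions are equivalent by a simple reindexing: given $s=1\otimes a_0+\sum_{j=1}^{m-1}U_j\otimes a_j\in E_m\otimes A$, the element $s'=U_m\otimes a_0+\sum_{j=1}^{m-1}U_j\otimes a_j\in\mathrm{span}(U_1,\ldots,U_m)\otimes A$ has the same min- and max-norm as $s$, because in the sup computing either norm one can factor out the unitary $u_m$ corresponding to the fresh free generator $U_m$ and reparametrize $v_j=u_m^{-1}u_j$, which again ranges over all unitaries.

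The equivalence (i)$\Leftrightarrow$(i)' is immediate from Wittstock's decomposition theorem: every CB map into $B(H)$ is decomposable with equal norm, so $\|iT\|_{D(\ell_\infty^n,B(H))}=\|iT\|_{cb}=\|T\|_{cb}$, the second equality because $i$ is completely isometric. The main obstacle in the whole argument is the decomposable-norm identification $\|T\|_{D(\ell_\infty^n,A)}=\|t_T\|_{\max}$: taking for granted Haagerup's max-max characterization of $\|\cdot\|_{dec}$ (which is itself nontrivial and relies on \cite{Ha}), the identification becomes routine; without that input one must separately prove the ``$\le$'' inequality by a Hahn--Banach/dilation argument, the ``$\ge$'' inequality being a direct row--column estimate using $a_j=b_j^{1/2}d_jc_j^{1/2}$ coming from the positivity of $\left(\begin{smallmatrix}b_j&a_j\\a_j^*&c_j\end{smallmatrix}\right)$ together with the fact that any unitary in the commutant of $A$ commutes with $b_j$ and $c_j$.
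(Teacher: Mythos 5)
Your proposal is correct and follows essentially the same route as the paper: you translate $\|T\|_{D(\ell_\infty^n,A)}$ and $\|T\|_{cb}$ into the max- and min-norms of the associated tensor in $\mathscr{C}\otimes A$ (the paper's \eqref{impu} and \eqref{eq81}), use the Wittstock/Haagerup identity $\|\cdot\|_{dec}=\|\cdot\|_{cb}$ for maps into $B(H)$ (the paper's \eqref{d46}) to get (i) $\Leftrightarrow$ (i)', and then invoke Theorem \ref{x5} (equivalently, Theorem \ref{ro5} with $B=B(H)$, which is how the paper phrases it). Your extra work --- deriving $\|T\|_{dec}=\|t_T\|_{\max}$ from Haagerup's max-max characterization and reindexing between $\{U_1,\dots,U_n\}$ and $\{1,U_1,\dots,U_{n-1}\}$ --- just reproves Lemma \ref{ro27} and the last assertion of Lemma \ref{pchoi}, which the paper quotes directly.
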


In \cite{JLM}, Junge and Le Merdy proved 
that $A$ has the WEP iff $M_N(A)$ satisfies (i) in Corollary  \ref{x1}
    for all $N\ge 1$. This can be viewed
  as an application of Remark \ref{x2} once one observes that
  by Kirchberg's fundamental theorem (see \cite{P})
 $ { \mathscr{C}\otimes_{\min} B(H)}={ \mathscr{C}\otimes_{\max} B(H)}$
  (isometrically) and for any $T:\ell_\infty^n \to A$ with associated tensor
  $t\in E_n \otimes A$ defined by $t=\sum_1^{n} U_{j-1} \otimes T(e_j)$,
  we have (see \S \ref{x3}) $$\|  T\|_{D(\ell_\infty^n ,A)}=\|t\|_{ \mathscr{C}\otimes_{\max} A}.$$
  Moreover we have (see \S \ref{min})
  $$\|  iT\|_{CB(\ell_\infty^n,B(H))}=\|  T\|_{cb} = \|t\|_{ \mathscr{C}\otimes_{\min} A}.$$
 Therefore (i) in Theorem \ref{x5}
 is the same as (i) in Corollary \ref{x1}.
\\
  The identity $\|  T\|_{cb}=\|  iT\|_{D(\ell_\infty^n,B(H))}$ 
   also shows that in Corollary \ref{x1}   (i)' is but a reformulation of (i).
\begin{rem}\label{x4}
The characterization of the WEP in Corollary \ref{x1}
 was claimed   by Haagerup
 in personal communication to  Junge and Le Merdy while
 they completed their paper \cite{JLM}.
 They do not have   a written trace of the proof.
 Similarly the author, who
 had just written \cite{Pro} and was-at that time-in close contact with Haagerup
  in connection with the latter's related  unpublished manuscript   \cite{H4}
  does not remember being informed about the content of Corollary \ref{x1}.
\end{rem}
 Incidentally the results of the unpublished manuscript   \cite{H4} are now available
 in \cite[chap. 23]{P5}.

Theorem \ref{x5} and Corollary \ref{x1} will be deduced from a more general result
where $B(H)$ is replaced by a general $C^*$-algebra.
Theorem \ref{x5}  and
Corollary \ref{x1} will be proved after Remark \ref{ro18}.

The reader will find detailed proofs of all the ingredients used in the sequel  
in \cite{P5}.

\section{Nuclear pairs}\label{nuc}

We start by a few general remarks around nuclearity for pairs.
  \begin{dfn} A pair of $C^*$ algebras $(A,B)$ will be called a nuclear pair
  \index{nuclear pair} if
  $$A \otimes_{\min} B=A \otimes_{\max} B,$$
  or equivalently if the min- and max-norm are equal on the algebraic tensor product $A\otimes B$.
 \end{dfn}
  \begin{rem}\label{da1} If the min- and max-norm are equivalent on $A\otimes B$,
 then they automatically are equal. 
  \end{rem}
    \begin{rem}\label{da20} 
    Let $A_1\subset A$ and $B_1\subset B$ be $C^*$-subalgebras.
    In general, the nuclearity of the pair $(A,B)$ does \emph{not} imply that of 
    $(A_1,B_1)$. 
    This ``defect" is a major feature of the notion of nuclearity.
    However, if  $(A_1,B_1)$ admit contractive c.p. projections (conditional expectations)
    $P: A \to A_1$ and $Q: B \to B_1$ 
    then  $(A_1,B_1)$ inherits the nuclearity of  $(A,B)$.   \end{rem}
Recall that $A$ is called nuclear
\index{nuclear $C^*$-algebra} if $(A,B)$ is nuclear for all $B$.\\
The basic examples  of nuclear $C^*$-algebras 
include all commutative ones,
the algebra $K(H)$ of all compact operators on an arbitrary Hilbert space $H$, $C^*(G)$
for   all amenable discrete groups $G$ and the Cuntz algebras.

We wish to single out    two fundamental examples
$$\mathscr{B}=B(\ell_2) \quad {\rm and } \quad\mathscr{C}=C^*(\F_\infty).$$
Recall that every separable unital $C^*$-algebra embeds in $\mathscr{B}$ and is a quotient of $\mathscr{C}$. Neithe $\mathscr{B}$ nor $\mathscr{C}$ is nuclear, nevertheless :
\begin{thm}[Kirchberg \cite{Kiuhf}]\label{kir} The pair $(\mathscr{B} ,\mathscr{C})$ is nuclear.
\end{thm}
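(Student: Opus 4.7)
The plan is to invoke Kirchberg's characterization of the WEP recalled in the excerpt just before Remark \ref{lanc}: for any $C^*$-subalgebra $A\subset B(H)$, the equality $A\otimes_{\min}\mathscr{C}=A\otimes_{\max}\mathscr{C}$ is equivalent to the property that every $*$-homomorphism $u:A\to M$ into a von Neumann algebra $M$ extends to a contractive c.p.\ map $\tilde u:B(H)\to M$. In other words, the theorem is equivalent to the assertion that $\mathscr{B}=B(\ell_2)$ has the WEP in this sense.

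Now take the tautological inclusion $\mathscr{B}\subset B(\ell_2)$, i.e.\ let $H=\ell_2$ so that $A=B(H)=\mathscr{B}$. The extension condition to be checked is then that every $*$-homomorphism $u:\mathscr{B}\to M$ into a von Neumann algebra extends to a c.c.p.\ map $B(\ell_2)\to M$. But here the ambient space $B(\ell_2)$ coincides with $\mathscr{B}$ itself, and any $*$-homomorphism between $C^*$-algebras is automatically completely positive and contractive, so one may simply take $\tilde u=u$. Thus the extension condition holds trivially, and Kirchberg's equivalence yields the desired tensor-norm identity.

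The main obstacle in this strategy is that it outsources everything to Kirchberg's equivalence, which is itself a substantial result. The non-trivial direction (extension property $\Rightarrow$ tensor identity) is where the real content lies: one must convert the formal extendability of $*$-representations into an actual identification of norms on $A\otimes\mathscr{C}$. The standard route exploits the universal property of $\mathscr{C}=C^*(\F_\infty)$ --- representations of $\mathscr{C}$ correspond bijectively to families of unitaries --- combined with an ultrapower/Hahn--Banach argument to realize any max-representation inside a larger space where the commuting unitaries can be absorbed into $A^{**}$ via the extension $\tilde u$. I would cite this equivalence from \cite{Kir}, with full details in \cite{P5}. A conceptually parallel route, which sidesteps any choice of embedding, would be to note that $B(\ell_2)$ is injective (Arveson's extension theorem), so in any inclusion $\mathscr{B}\subset B$ the identity on $\mathscr{B}$ extends to a u.c.p.\ projection $B\to\mathscr{B}$, giving a weak expectation; Kirchberg's equivalence in the form of Remark \ref{lanc} then again closes the loop.
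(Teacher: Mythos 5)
The paper offers no proof of Theorem \ref{kir}: it is quoted from \cite{Kiuhf}, with simpler proofs referenced in \cite{P}, \cite{P4} and \cite{P5}. So the only issue is whether your argument is a valid independent proof, and it is not: it is circular. The equivalence you invoke --- for $A\subset B(H)$, $A\otimes_{\min}\mathscr{C}=A\otimes_{\max}\mathscr{C}$ if and only if every $*$-homomorphism $u:A\to M$ into a von Neumann algebra extends to a contractive c.p.\ map $\tilde u:B(H)\to M$ --- is Kirchberg's characterization of the WEP, and the direction you need (extension property $\Rightarrow$ tensor identity) contains Theorem \ref{kir} as an essential ingredient. Indeed, given such extensions (or a weak expectation), the standard argument only yields $\|t\|_{A\otimes_{\max}\mathscr{C}}\le\|t\|_{B(H)\otimes_{\max}\mathscr{C}}$; to reach the min norm one must then know $\|t\|_{B(H)\otimes_{\max}\mathscr{C}}=\|t\|_{B(H)\otimes_{\min}\mathscr{C}}$, which is exactly the statement to be proved. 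Put differently: specialized to $A=B(H)$, where the extension hypothesis is vacuously satisfied (by taking $\tilde u=u$, or by injectivity of $B(H)$, your second route), the implication ``extension property $\Rightarrow$ $\min=\max$'' \emph{is} Theorem \ref{kir}, so citing that implication proves nothing. The same objection applies to your appeal to Remark \ref{lanc}: the equivalence of Lance's definition with the tensor-norm definition is again Kirchberg's theorem \cite{Kir}, whose nontrivial direction rests on Theorem \ref{kir}.

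A genuine proof must engage with the min norm on $B(H)\otimes\mathscr{C}$ directly. The route of \cite{P}, whose ingredients actually appear in this paper, is: for $t=\sum_{i\in\dot I}U_i\otimes x_i$ with $x_i\in B(H)$, the identity \eqref{impu} gives $\|t\|_{\max}=\|T\|_{dec}$, the factorization formula \eqref{da8'} (cf.\ Proposition \ref{da7}) identifies $\|T\|_{dec}$ with $\inf\{\|\sum a_i^*a_i\|^{1/2}\|\sum b_i^*b_i\|^{1/2}\}$ over factorizations $x_i=a_i^*b_i$ in $B(H)$, and \eqref{eq82} identifies that infimum with $\|t\|_{\min}$; one must then still pass from the linear span of the $U_i$ to all of $\mathscr{C}$, which is a further nontrivial step. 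None of this content is captured by your reduction.
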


A simpler proof appears in \cite{P} (or in \cite{P4}, or now in \cite{P5}).

Since Kirchberg \cite{Kir} showed that a $C^*$-algebra $A$
has Lance's WEP iff the pair $(A,\mathscr{C})$ is nuclear,
we took this as our definition of the WEP. Kirchberg \cite{Kir} also showed
that $A$
has a certain local lifting property (LLP) iff the pair $(A,\mathscr{B})$ is nuclear. We again take this as the definition of the LLP.
With this terminology, Theorem \ref{kir} admits the following generalization:
\begin{cor} Let $B,C$ be $C^*$-algebras. If $B$ has   the WEP and $C$ the LLP then the pair
$(B,C)$ is nuclear.
\end{cor}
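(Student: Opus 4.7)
The plan is to embed $B$ faithfully in $B(H)$ for some Hilbert space $H$ and to string together the chain
$$B\otimes_{\max}C \ \hookrightarrow\  B(H)\otimes_{\max}C \ =\  B(H)\otimes_{\min}C \ \hookleftarrow\  B\otimes_{\min}C,$$
where the first inclusion is to be made isometric by the WEP of $B$, the middle equality is furnished by the LLP of $C$, and the last inclusion is the always-isometric restriction built into the definition of $\otimes_{\min}$. Once each of these is justified, one reads off $\|\cdot\|_{\max}=\|\cdot\|_{\min}$ on $B\otimes C$ and concludes that $(B,C)$ is nuclear.

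For the WEP step, I would invoke Kirchberg's characterization recalled in the introduction: for $B\subset B(H)$, the equation $B\otimes_{\min}\mathscr{C}=B\otimes_{\max}\mathscr{C}$ is equivalent to the existence, for every $*$-homomorphism from $B$ into a von Neumann algebra $M$, of a c.c.p. extension $B(H)\to M$. Applying this to the canonical embedding $B\hookrightarrow B^{**}$ produces a generalized weak expectation $V\colon B(H)\to B^{**}$ in the sense of Definition \ref{dgwep}. Tensoring with $\mathrm{id}_C$ yields a contraction $V\otimes\mathrm{id}_C\colon B(H)\otimes_{\max}C\to B^{**}\otimes_{\max}C$. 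Combined with the standard isometric embedding $B\otimes_{\max}C\hookrightarrow B^{**}\otimes_{\max}C$ -- valid because any commuting pair of representations $(\pi,\sigma)$ of $(B,C)$ on $K$ extends to a commuting pair $(\tilde\pi,\sigma)$ of $(B^{**},C)$, since $\sigma(C)$ commutes with $\pi(B)$ and hence with its strong closure $\pi(B)''\supset\tilde\pi(B^{**})$ -- this forces $B\otimes_{\max}C\hookrightarrow B(H)\otimes_{\max}C$ to be isometric, as desired.

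For the LLP step, the hypothesis $C\otimes_{\min}\mathscr{B}=C\otimes_{\max}\mathscr{B}$ must be extended from $\mathscr{B}=B(\ell_2)$ to $B(H)$ for general $H$. This is routine: any $t\in B(H)\otimes C$ lies in $B_0\otimes C$ for a separable $C^*$-subalgebra $B_0\subset B(H)$, and computing the min- and max-norms inside such a separable corner reduces the problem to the case of $\mathscr{B}$ itself. Concatenating the two steps then gives the claimed identity of norms, completing the proof. The main obstacle, and the genuinely non-formal ingredient, is the isometric max-embedding $B\otimes_{\max}C\hookrightarrow B^{**}\otimes_{\max}C$; although classical, it is exactly the point where we use the Kirchberg--Lance upgrade of the tensor-product definition of WEP to the existence of a generalized weak expectation, rather than merely the tensor-product equality itself.
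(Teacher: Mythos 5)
The paper itself states this corollary without proof (it is attributed to Kirchberg, with details deferred to \cite{P5}), so your proposal can only be judged on its own terms. Your overall architecture --- the chain $B\otimes_{\max}C \hookrightarrow B(H)\otimes_{\max}C = B(H)\otimes_{\min}C \hookleftarrow B\otimes_{\min}C$ --- is the standard and correct one, and your treatment of the WEP step is sound: producing a generalized weak expectation $V:B(H)\to B^{**}$, tensoring it with $Id_C$ against $\otimes_{\max}$ (which is legitimate by \eqref{x8} and \eqref{x9}), and using the isometry $B\otimes_{\max}C\subset B^{**}\otimes_{\max}C$ (which the paper also invokes, in the proof of Theorem \ref{ro5}) together gives exactly the isometry of the first arrow. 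This is in fact the same mechanism as the implication (iii) $\Rightarrow$ (iv) of Theorem \ref{ro5}.

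The gap is in your LLP step. The paper defines the LLP of $C$ as nuclearity of the pair $(C,\mathscr{B})$ with $\mathscr{B}=B(\ell_2)$, and since $B$ is an arbitrary $C^*$-algebra the space $H$ in your chain may be nonseparable, so you genuinely need $(C,B(H))$ nuclear for such $H$. Your justification --- pass to a separable $C^*$-subalgebra $B_0\subset B(H)$ containing the left legs of $t$ and ``compute the min- and max-norms inside such a separable corner'' --- does not work as written, because $\otimes_{\max}$ is not injective: the norm of $t$ in $B_0\otimes_{\max}C$ only dominates, and need not equal, its norm in $B(H)\otimes_{\max}C$, and in any case $B_0$ is not $\mathscr{B}$, so nuclearity of $(C,\mathscr{B})$ says nothing directly about $B_0\otimes C$. (This is precisely the ``defect'' the paper warns about in Remark \ref{da20}.) The step is nevertheless repairable by a standard argument: choose a separable subspace $K\subset H$ reducing for $B_0$ on which $B_0$ acts faithfully, so that the compression $\rho:x\mapsto P_Kx|_K$ is a faithful $*$-homomorphism on $B_0$ and a u.c.p.\ map on $B(H)$; then $\|t\|_{B(H)\otimes_{\min}C}=\|(\rho\otimes Id_C)(t)\|_{B(K)\otimes_{\min}C}=\|(\rho\otimes Id_C)(t)\|_{B(K)\otimes_{\max}C}$ by nuclearity of $(C,B(K))$, and one returns to $B(H)\otimes_{\max}C$ by extending $\rho^{-1}:\rho(B_0)\to B(H)$ to a u.c.p.\ map $W:B(K)\to B(H)$ via Arveson (using injectivity of $B(H)$) and tensoring $W$ with $Id_C$ against $\otimes_{\max}$. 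You should either supply this argument or explicitly quote the lemma that the LLP of $C$ yields $C\otimes_{\min}B(H)=C\otimes_{\max}B(H)$ for every $H$; as it stands, the sentence you wrote would not survive scrutiny.
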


 In \cite{JP} it was shown that $\mathscr{B}$ failed the LLP,
or equivalently that the pair $(\mathscr{B},\mathscr{B} )$ was not nuclear,
which gave a negative answer to one of Kirchberg's questions in \cite{Kir}.
However, the following major conjecture remains open:\\
{\bf Kirchberg's conjecture :} The pair $(\mathscr{C},\mathscr{C} )$ is nuclear,
or equivalently $\mathscr{C}$ has the WEP.

Kirchberg showed at the end of \cite{Kir} that this conjecture is equivalent
to the Connes embedding problem whether any finite von Neumann algebra
embeds in an ultraproduct of matrix algebras.

The Kirchberg conjecture asserts that the min and max norms
coincide on $  \mathscr{C}\otimes \mathscr{C}   $.
More recently in \cite[Th. 29]{Oz2}, Ozawa   
   proved that to prove the Kirchberg conjecture it suffices to show
   that they coincide on $E_n \otimes E_n$ for all $n\ge 1$,
   where $E_n$ is as in \eqref{x10}.

\section{Biduals}
 
 We will use here the basic facts and   notation on biduals
 of $C^*$-algebras. When $A$ is a $C^*$-algebra and $M$ a von Neumann one, 
  for all $u:A \to M$ we denote
 $$ {\ddot u}=({u^{*}}_{|{M_*}})^*: A^{**} \to M.$$
 The following statement gathers well known facts (the dec-norm and $D(A,M)$ are defined in the next section).
\begin{thm}\label{bidext}
 Let $u:A\to M$ be a linear map from a $C^*$-algebra to a von Neumann algebra.
  \begin{enumerate}
   \item[{\rm (i)}] If $u$ is a $*$-homomorphism then ${\ddot u}:A^{**}\to M$ is a normal $*$-homomorphism.
   \item[{\rm (ii)}] $u\in CP(A,M)\Rightarrow {\ddot u}\in CP(A^{**},M)$ and $\|\ddot{u}\|=\| u\|$.
  \item[{\rm (iii)}] $u\in CB(A,M)\Rightarrow {\ddot u}\in CB(A^{**},M)$ and $\|\ddot{u}\|_{cb}=\| u\|_{cb}$.
   \item[{\rm (iv)}] $u\in D(A,M)\Rightarrow {\ddot u}\in D(A^{**}, M)$ and $\|\ddot{u}\|_{dec}=\| u\|_{dec}$.
  \end{enumerate}
\end{thm}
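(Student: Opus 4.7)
The plan is to first check that $\ddot u$ is a weak*-continuous extension of $u$, then transfer each of the four properties from $A$ to $A^{**}$ by a weak*-density argument. By construction $\ddot u=(u^*|_{M_*})^*:A^{**}\to (M_*)^*=M$ is the Banach-space adjoint of the bounded operator $u^*|_{M_*}:M_*\to A^*$, hence is continuous from $\sigma(A^{**},A^*)$ to $\sigma(M,M_*)$. Pairing against $\varphi\in M_*$ immediately gives $\ddot u(a)=u(a)$ for every $a\in A$, so $\ddot u$ extends $u$; combined with $\|\ddot u\|=\|u^*|_{M_*}\|\le\|u\|$ this yields $\|\ddot u\|=\|u\|$, which settles the scalar norm claim in (ii). For (i), I would fix $x\in A$ and observe that $y\mapsto\ddot u(xy)$ and $y\mapsto\ddot u(x)\ddot u(y)$ are both weak*-continuous maps $A^{**}\to M$ (using separate weak*-continuity of multiplication on the bidual von Neumann algebra $A^{**}$ and on $M$) and agree on the weak*-dense subset $A$ since $u$ is a homomorphism; hence they agree on $A^{**}$. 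Then fix $y\in A^{**}$ and rerun the argument in the $x$-variable. The same pattern with weak*-continuous involution gives $\ddot u(x^*)=\ddot u(x)^*$, and normality of $\ddot u$ is just its weak*-continuity.

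For (ii) and (iii), the organizational key is the canonical identification $M_n(A)^{**}=M_n(A^{**})$ as von Neumann algebras; under it, $(\ddot u)_n$ (applied entrywise) and $\ddot{u_n}$ are both weak*-continuous maps $M_n(A)^{**}\to M_n(M)$ that agree on $M_n(A)$, hence coincide. Applying the scalar norm equality to $u_n$ gives $\|(\ddot u)_n\|=\|u_n\|$, and taking the supremum over $n$ yields (iii). For (ii), any positive $X\in M_n(A^{**})$ is a weak*-limit of a net in $M_n(A)_+$ (the Kaplansky-density content), and weak*-continuity of $(\ddot u)_n=\ddot{u_n}$ together with the weak*-closedness of $M_n(M)_+$ forces $(\ddot u)_n(X)\ge 0$; hence $\ddot u$ is completely positive.

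For (iv), I would invoke the standard matrix characterization of the dec-norm: $\|u\|_{dec}\le c$ iff there exist CP maps $S_1,S_2:A\to M$ of norm $\le c$ such that the block map $\begin{pmatrix}S_1 & u\\ u^\sharp & S_2\end{pmatrix}:M_2(A)\to M_2(M)$ is CP, where $u^\sharp(x)=u(x^*)^*$. Passing this data to biduals, part (ii) applied to $S_1,S_2$ and to the block map (together with the identity $\ddot{u^\sharp}=(\ddot u)^\sharp$, another weak*-density argument using weak*-continuity of $*$) yields a CP certificate for $\ddot u$ with the same norm bound, giving $\|\ddot u\|_{dec}\le\|u\|_{dec}$. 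The reverse inequality is obtained by restricting any CP certificate $\tilde S_1,\tilde S_2:A^{**}\to M$ for $\ddot u$ to $A$, since restriction preserves complete positivity and can only decrease the norm. The single delicate bookkeeping point I anticipate is the identification $(\ddot u)_n=\ddot{u_n}$ under $M_n(A)^{**}=M_n(A^{**})$: this is where the bidual construction must be shown to commute cleanly with matricial amplification, and once that is in place the rest of (ii)--(iv) is essentially formal.
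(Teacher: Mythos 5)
Your argument is correct, and there is nothing in the paper to compare it against: Theorem \ref{bidext} is stated there as a collection of ``well known facts'' with no proof given (the reader is deferred to \cite{P5}, and part (iv) reappears as Lemma \ref{a276}, attributed to Haagerup \cite{Ha}). Your route --- weak*-to-weak* continuity of $\ddot u=(u^*|_{M_*})^*$, weak*-density of $A$ in $A^{**}$ (Goldstine/Kaplansky), the identification $M_n(A)^{**}=M_n(A^{**})$ with $(\ddot u)_n=\ddot{(u_n)}$, and transfer of the $2\times 2$ CP certificate for the dec-norm --- is the standard proof. One cosmetic remark: in (iv) you phrase the certificate as a CP map $M_2(A)\to M_2(M)$, whereas the paper's definition \eqref{d12} uses the map $x\mapsto\bigl(\begin{smallmatrix}S_1(x)&u(x)\\u(x^*)^*&S_2(x)\end{smallmatrix}\bigr)$ in $CP(A,M_2(M))$; the two formulations are equivalent (compose with $x\mapsto J\otimes x$ in one direction and compress an amplification in the other), and your bidual argument goes through verbatim with the paper's version.
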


To put the connection with biduals in proper perspective
we state the following review statement. Part (i) is the celebrated
Choi-Effros theorem based on Connes's work on injective factors,
(ii)  can be found in Kirchberg's \cite{Kir}, and   (iii) is derived from it.
 \begin{thm}\label{kk1}
Let $i_A: A \to A^{**}$ be the natural inclusion, $A$ being a $C^*$-algebra.
\index{$i_A$}
\\
(i) $A$ is nuclear if and only if  for some (or any)
 embedding $A^{**}\subset B(H) $   there is 
a projection $P: B(H) \to A^{**}$ with $\|P\|_{cb}=1$.\\
(ii) $A$ has the WEP if and only if 
 for some (or any)
 embedding  $A \subset B(H) $
there is 
a projection $P: B(H)^{**} \to A^{**}$ with $\|P\|_{cb}=1$.\\
(iii) $A$ is QWEP if and only if 
 for some  
 embedding  $A^{**} \subset B(H)^{**} $
there is 
a projection $P: B(H)^{**} \to A^{**}$ with $\|P\|_{cb}=1$.
\end{thm}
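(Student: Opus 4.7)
The plan is to treat each of the three parts as a translation of a classical black box into the biduality formalism of Theorem \ref{bidext} and Remark \ref{x11}: Choi--Effros for (i), Kirchberg's weak-expectation characterization for (ii), and a bidual reformulation of QWEP for (iii). One standing side observation, used silently throughout, is that the ``some or any embedding'' clause is essentially automatic: any two faithful normal embeddings of a von Neumann algebra are related by amplifications and compressions, which are ccp of norm one in both directions, so the existence of a ccp projection onto it from one ambient $B(H)$ transfers to any other.

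For part (i), the Choi--Effros theorem (ultimately resting on Connes's theorem that semidiscrete equals injective for von Neumann algebras) says: $A$ is nuclear iff $A^{**}$ is injective as a von Neumann algebra. Injectivity of a von Neumann algebra $M \subset B(H)$ is by definition the existence of a ccp projection $B(H) \to M$, which is exactly the content of (i).

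For part (ii), I would invoke Kirchberg's characterization of the WEP: $A$ has the WEP iff for some (any) faithful $A \subset B(H)$ there exists a generalized weak expectation $V : B(H) \to A^{**}$ in the sense of Definition \ref{dgwep}, i.e. a ccp map extending $i_A$. Remark \ref{x11} converts such a $V$ into the contractive (and hence automatically ccp, by the Tomiyama argument cited there) projection $P = \ddot V : B(H)^{**} \to A^{**}$, and conversely any such $P$ restricts to $V = P_{|B(H)}$. This gives (ii).

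For part (iii), the strategy is to reduce to the folklore equivalence ``$A$ is QWEP iff $A^{**}$ is QWEP as a von Neumann algebra'' and then unwind QWEP at the bidual level. The easy direction: if $A = B/J$ with $B$ WEP, then the normal extension $\ddot q : B^{**} \to A^{**}$ of the quotient map $q: B \to A$ identifies $A^{**}$ with a central direct summand of $B^{**}$; applying (ii) to a faithful $B \subset B(H)$ yields a ccp projection $B(H)^{**} \to B^{**}$, and composing with the central cutdown $B^{**} \to A^{**}$ produces the projection required in (iii). The main obstacle I foresee is the converse: starting only from a ccp projection $P : B(H)^{**} \to A^{**}$, one must manufacture an actual $C^*$-algebra with WEP having $A$ as a quotient. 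The standard route is to exploit that $A^{**}$ is ccp-complemented in the injective (hence WEP) von Neumann algebra $B(H)^{**}$, concluding that $A^{**}$ is QWEP as a von Neumann algebra, and then to descend from the bidual back to the $C^*$-level via a Hahn--Banach / ultraproduct / local reflexivity argument identifying $A$ with a quotient of a WEP cover.
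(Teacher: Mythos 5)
The paper offers no proof of Theorem \ref{kk1}: it is explicitly a review statement, with (i) attributed to Choi--Effros (via Connes's work), (ii) to Kirchberg \cite{Kir}, and (iii) ``derived from it''. Your treatment of (i) and (ii) invokes exactly these black boxes, and the translation between a generalized weak expectation $V:B(H)\to A^{**}$ and the projection $P=\ddot V:B(H)^{**}\to A^{**}$ via Remark \ref{x11} is correct; the only quibble is that for (ii) the ``some or any'' clause is not settled by amplification/compression, since the embedding there is of the $C^*$-algebra $A$ rather than of the von Neumann algebra $A^{**}$ --- the independence of the faithful representation is part of Kirchberg's theorem itself (cf.\ the faithfulness discussion and references in Remark \ref{lanc}).

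Part (iii), however, contains a genuine error. You assert that $A^{**}$ is ccp-complemented in ``the injective (hence WEP) von Neumann algebra $B(H)^{**}$''. For infinite-dimensional $H$ the algebra $B(H)^{**}$ is neither injective nor WEP: by part (i) of the very theorem you are proving, injectivity of $B(H)^{**}$ would force $B(H)$ to be nuclear, which it is not, and for von Neumann algebras WEP coincides with injectivity. Worse, if $B(H)^{**}$ were WEP then any range of a ccp projection in it would inherit the WEP (the mechanism of Remark \ref{da20}), so your argument would prove that QWEP implies WEP --- precisely the false implication that Remark \ref{trap1} warns against. What is true, and what the converse of (iii) actually needs, is that $B(H)^{**}$ is \emph{QWEP} (the bidual of a WEP $C^*$-algebra is QWEP), and that QWEP --- unlike WEP --- is stable under passing to ranges of ccp projections; combined with the equivalence ``$A$ is QWEP $\Leftrightarrow$ $A^{**}$ is QWEP'' (itself a nontrivial result of Kirchberg, which your closing ``descend to the $C^*$-level'' sentence leaves entirely as a black box), this yields the missing direction. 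Since the paper delegates all of (iii) to \cite{Kir}, citation-level granularity is acceptable, but the specific justification you give must be repaired as above.
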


 \begin{rem} \label{trap1} We emphasize that (in sharp contrast with the analogue for injectivity)
 the existence of an embedding $u: A^{**} \subset B(H)^{**}$
 admitting a c.p. contractive projection $P: B(H)^{**} \subset A^{**}$
 does \emph{not} in general imply the WEP for $A$.
 Indeed,  the  QWEP does \emph{not} imply the WEP.
 For instance, for $G=\F_\infty$, the reduced $C^*$-algebra $C_\lambda^*(G)$
 is QWEP, but it is WEP if and only if    $G$ is amenable.    \end{rem}

   \section{Decomposable maps}\label{dec} 
 
 This section is devoted to linear maps that are decomposable
 as  linear combinations of c.p. maps and to  the appropriate norm
 denoted by $\|\cdot\|_{dec}$.
 As will soon be clear, these maps and the dec-norm play the same role for
 the max-tensor product as  cb-maps and the cb-norm do
 with respect to the min-tensor product.
 
 In this section, the letters $A,B,C$ will denote $C^*$-algebras.
 \\  We will denote by $D(A,B)$ the set of all ``decomposable'' maps $u:A\to B$, i.e. the maps that are in the linear span of $CP(A,B)$.  This means that $u\in D(A,B)$ if and only if
there are $u_j \in CP(A,B)\quad(j=1,2,3,4)$ such that
$$u=u_1 -u_2 +i(u_3 -u_4 ).$$

A simple minded choice of norm would be to take $\| u\|=\inf \sum^{4}_{1}\| u_j \|$, but this is not the optimal choice.  In many respects, the ``right'' norm on $D(A,B)$ is the following one, introduced by  Haagerup in \cite{Ha}. \index{Haagerup}
    We denote
 \begin{equation}\label{d11}\| u\|_{dec}=\inf\{\max\{\| S_1\|,\| S_2\|\}\}
 \end{equation}
  where the infimum runs over all maps $S_1 ,S_2\in CP(A,B)$ such that the map
 \begin{equation}\label{d12} V: x\to \left(
\begin{matrix}
 S_1 (x) & u(x)\\
 u(x^* )^* & S_2 (x)
\end{matrix}
\right) \end{equation}
is in $CP(A,M_2 (B))$.  

We will use the notation
$$u_*(x)=u(x^*)^*.$$
Note that $u=u_*$ if and only if $u$ takes self-adjoint elements
of $A$ to self-adjoint elements of $B$.
This holds in particular for any  c.p.  map $u$.

With this notation, we can write
 $$V=\left(\begin{matrix}
  S_1   & u\\
 u_* & S_2 
\end{matrix}\right).$$
Then $D(A,B)$ equipped with the norm $\| \ \|_{dec}$ is a Banach space.

\begin{rem} It is easy to show that the infimum
in the definition \eqref{d11} of the dec-norm is a minimum (i.e. this infimum is attained)
when the range $B$ is a von Neumann algebra, or when
there is a contractive c.p. projection from $B^{**}$ to $B$.
Haagerup  \index{Haagerup} raises in \cite{Ha} the   (apparently still open) question whether
it is always a minimum.
\end{rem}
  
  \begin{lem}\label{d60} The following simple properties hold:
  \begin{itemize}   
  \item[{\rm (i)}] If $u\in CP(A,B)$,  then 
  \begin{equation}\label{x9}
  \| u\|_{dec}=\| u\|_{cb}=\| u\|.\end{equation}
 \item[{\rm (ii)}]   If $u(x)=u(x^* )^*$ (i.e. $u$ is ``self-adjoint'') then
   \begin{equation}\label{d74} \| u\|_{dec}=\inf\{\| u_1 +u_2 \ \|\ \mid\   u_1 ,u_2 \in CP(A,B), \ u=u_1 -u_2 \}.\end{equation}
  \item[{\rm (iii)}] To any  $u: A\to B$ we  associate the self-adjoint mapping $\tilde u=\left(\begin{matrix}
   0 & u\\
 u_* &  0
\end{matrix}\right)$. \\Then $u\in D(A,B) $ if and only if $\tilde u\in D(A,M_2 (B)) $ and $\| u\|_{dec}
=\| \tilde u\|_{dec}$.
\end{itemize}
   \end{lem}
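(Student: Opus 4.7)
The plan is to handle the three items in order, since (ii) and (iii) reuse the c.p.\ manipulations of (i).

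\emph{Part (i).} For $\|u\|_{dec}\le\|u\|$, take $S_1=S_2=u$ in \eqref{d12}; the map
$V(x)=\bigl(\begin{smallmatrix}u(x)&u(x)\\u(x)&u(x)\end{smallmatrix}\bigr)$
factors as $\iota\circ u$ with $\iota:B\to M_2(B)$, $\iota(b)=vbv^*$, $v=\bigl(\begin{smallmatrix}1\\1\end{smallmatrix}\bigr)$; this $\iota$ is c.p., hence so is $V$, and $\max(\|S_1\|,\|S_2\|)=\|u\|$. For the reverse inequality, positivity of $V(1)\in M_2(B)$ forces $\|u(1)\|^2\le\|S_1(1)\|\,\|S_2(1)\|$, and since $u,S_1,S_2$ are c.p.\ we have $\|u\|=\|u(1)\|$ and $\|S_j\|=\|S_j(1)\|$ (passing to unitizations if needed), so $\max(\|S_1\|,\|S_2\|)\ge\|u\|$. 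The identity $\|u\|_{cb}=\|u\|$ for c.p.\ $u$ is the standard Stinespring consequence.

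\emph{Part (ii).} Given $u=u_1-u_2$ with $u_j\in CP(A,B)$ and $u=u_*$, set $S_1=S_2=u_1+u_2$. Then
\[V=\left(\begin{matrix}u_1+u_2&u_1-u_2\\u_1-u_2&u_1+u_2\end{matrix}\right)=W\,\mathrm{diag}(u_1,u_2)\,W^*,\quad W=\left(\begin{matrix}1&1\\1&-1\end{matrix}\right),\]
is c.p., whence $\|u\|_{dec}\le\|u_1+u_2\|$. For the opposite inequality, take any admissible c.p.\ $V$ for $u$ (its off-diagonal entries coincide since $u=u_*$) and conjugate by the unitary $W/\sqrt 2$: the resulting c.p.\ map has diagonal entries $(S_1+S_2)/2+u$ and $(S_1+S_2)/2-u$, the cross-terms cancelling because $u=u_*$. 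By corner compression these are each c.p., and halving yields a decomposition $u=u_1-u_2$ with $u_1,u_2\in CP(A,B)$ and $u_1+u_2=(S_1+S_2)/2$, of norm at most $\max(\|S_1\|,\|S_2\|)$.

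\emph{Part (iii).} The claim $u\in D\Leftrightarrow\tilde u\in D$ follows from the norm equality, whose two directions I treat in parallel. For $\|\tilde u\|_{dec}\le\|u\|_{dec}$, starting from an admissible $V$ for $u$ take $S_1'=S_2'=\mathrm{diag}(S_1,S_2):A\to M_2(B)$; the resulting candidate for $\tilde u$, viewed in $M_4(B)=M_2(M_2(B))$, is conjugate by a permutation unitary in $M_4$ to a block-diagonal map whose $2\times 2$ blocks are copies of $V$ up to a further permutation, hence c.p.; its admissible norm equals $\max(\|S_1\|,\|S_2\|)$. For $\|u\|_{dec}\le\|\tilde u\|_{dec}$, starting from an admissible $W:A\to M_2(M_2(B))$ for $\tilde u$ with diagonal $T_1,T_2$, I take the principal $2\times 2$ compression of $W(x)\in M_4(B)$ to the indices where $u$ and $u_*$ sit (positions $\{1,4\}$ in the natural identification): this is a c.p.\ map $\bigl(\begin{smallmatrix}\alpha&u\\u_*&\delta\end{smallmatrix}\bigr)$ with $\alpha,\delta$ corner compressions of $T_1,T_2$, so $\max(\|\alpha\|,\|\delta\|)\le\max(\|T_1\|,\|T_2\|)$.

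The main care point is the combinatorial bookkeeping in (iii): one must identify the correct pair of indices in $M_4(B)$ where the off-diagonal entries of $W$ are precisely $u$ and $u_*$ rather than an entry of some $T_i$. Once the identification $M_2(M_2(B))\cong M_4(B)$ is fixed, this is unambiguous, and the rest of the argument reduces to the stability of complete positivity under permutation conjugation and corner compression already exploited in (i) and (ii).
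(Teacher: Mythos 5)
Your proof is correct. The paper states this lemma without proof, deferring to Haagerup's \cite{Ha}, and your argument---conjugating by the fixed matrix $\left(\begin{smallmatrix}1&1\\1&-1\end{smallmatrix}\right)$, by permutation unitaries in $M_4$, and taking corner compressions of c.p.\ maps---is exactly the standard one found there, carried out correctly in all three parts.
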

     
 \begin{pro}\label{pdec} The following additional properties hold:
\begin{itemize}
 \item[{\rm (i)}] We have $D(A,B)\subset CB(A,B)$ and
  \begin{equation}\label{ed89}
  \forall u\in D(A,B)\quad \|u\|_{cb}\leq\| u\|_{dec}.\end{equation} 
 
  \item[{\rm (ii)}] If $u\in D(A,B) $ and $ v\in D(B,C)$ then $vu\in D(A,C)$
   and 
   \begin{equation}\label{d45}
   \| vu\|_{dec}\leq\| v\|_{dec}\| u\|_{dec}.
   \end{equation}
\end{itemize}
   \end{pro}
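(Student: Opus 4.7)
The plan for Proposition \ref{pdec} is to handle the two parts separately, both by exploiting the Stinespring structure of the $2\times 2$ block map $V$ from \eqref{d12}.

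For part (i), the inclusion $D(A,B)\subset CB(A,B)$ is immediate, since every c.p.\ map is completely bounded and $CB(A,B)$ is a vector space. For the norm bound, fix $u\in D(A,B)$ and $\varepsilon>0$, and choose $S_1,S_2\in CP(A,B)$ realizing the infimum in \eqref{d11} up to $\varepsilon$, so that the corresponding $V$ is c.p.\ with $\max\{\|S_1\|,\|S_2\|\}\le \|u\|_{dec}+\varepsilon$. Applying Stinespring's theorem to $V:A\to M_2(B)\subset B(H\oplus H)$ produces a $*$-representation $\pi:A\to B(\hat K)$ and operators $\alpha,\beta:H\to \hat K$ with
\[
V(x)=\begin{pmatrix} \alpha^*\pi(x)\alpha & \alpha^*\pi(x)\beta \\ \beta^*\pi(x)\alpha & \beta^*\pi(x)\beta\end{pmatrix}.
\]
In particular $\|\alpha\|^2=\|S_1\|$ and $\|\beta\|^2=\|S_2\|$ (in the unital case via $\alpha^*\alpha=S_1(1_A)$; the non-unital case reduces to this by unitization). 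Since $\pi$ is completely contractive, the formula $u(x)=\alpha^*\pi(x)\beta$ yields
\[
\|u\|_{cb}\le \|\alpha\|\,\|\beta\|= \|S_1\|^{1/2}\|S_2\|^{1/2}\le \max\{\|S_1\|,\|S_2\|\}\le \|u\|_{dec}+\varepsilon,
\]
and letting $\varepsilon\to 0$ gives \eqref{ed89}.

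For part (ii), given admissible pairs $(S_1,S_2)$ for $u$ and $(T_1,T_2)$ for $v$, the plan is to show that $(T_1S_1,T_2S_2)$ is admissible for $vu$. Let $V_u:A\to M_2(B)$ and $V_v:B\to M_2(C)$ be the associated c.p.\ block maps, and let $V_v^{(2)}:M_2(B)\to M_2(M_2(C))\cong M_4(C)$ denote the entrywise $2\times 2$ amplification of $V_v$, which is itself c.p. Then $W:=V_v^{(2)}\circ V_u:A\to M_4(C)$ is c.p., and, identifying $M_4(C)$ with $M_2(M_2(C))$, its $(i,j)$-block equals $V_v(V_u(x)_{ij})$. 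Compressing $W$ by the isometry $\C^2\to\C^4$ sending the standard basis to $(e_1,e_4)$ then produces a c.p.\ map equal to
\[
\begin{pmatrix} T_1S_1 & vu \\ v_*u_* & T_2S_2\end{pmatrix},
\]
and the one-line identity $v_*u_*=(vu)_*$ shows this is a valid $V$ realizing the pair $(T_1S_1,T_2S_2)$ for $vu$. Since $\|T_iS_i\|\le \|T_i\|\,\|S_i\|$, taking infima over both pairs gives \eqref{d45}.

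The only mild subtlety is the index bookkeeping inside the compression of $W$: verifying that the selected four entries of the $4\times 4$ matrix $W(x)$ are exactly $T_1S_1(x)$, $vu(x)$, $v_*u_*(x)$, and $T_2S_2(x)$. This is pure calculation, and not a real obstacle.
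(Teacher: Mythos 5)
Your proof is correct: part (i) is the standard Stinespring argument reading $u$ off the $(1,2)$ corner of the dilated block map (with the $\varepsilon$ correctly inserted since the infimum in \eqref{d11} need not be attained), and part (ii) correctly produces the admissible pair $(T_1S_1,T_2S_2)$ for $vu$ by composing $V_u$ with the amplification $Id_{M_2}\otimes V_v$ and compressing to the $(1,4)$ corner, using $v_*u_*=(vu)_*$. The paper itself gives no proof of Proposition \ref{pdec}, deferring to Haagerup's article \cite{Ha}, where essentially this same argument appears, so your route is the expected one.
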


 The preceding results are valid with an arbitrary range.
 However, the special case when the range is $B(H)$ (or is injective)
 is quite important:

\begin{pro}\label{prodec} If $B=B(H)$ or if $B$ is an injective $C^*$-algebra, then
 $$D(A,B)=CB(A,B)$$ and for any $u \in CB(A,B)$ we have
  \begin{equation}\label{d46} \| u\|_{dec}=\| u\|_{cb}.\end{equation} \end{pro}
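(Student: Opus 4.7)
\medskip

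\textbf{Proof plan for Proposition \ref{prodec}.} One direction is already in hand: Proposition \ref{pdec}(i) gives $D(A,B)\subset CB(A,B)$ together with $\|u\|_{cb}\le \|u\|_{dec}$ for any $u\in D(A,B)$, regardless of $B$. So the entire content is to show, when $B=B(H)$ or $B$ is injective, that every $u\in CB(A,B)$ lies in $D(A,B)$ with $\|u\|_{dec}\le \|u\|_{cb}$.

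The heart of the matter is the case $B=B(H)$, which I would handle by Wittstock/Paulsen via a Stinespring-type factorization for cb maps. Given $u\in CB(A,B(H))$ with $\|u\|_{cb}\le 1$, there exist a Hilbert space $K$, a $*$-representation $\pi:A\to B(K)$, and operators $V_1,V_2:H\to K$ with $\|V_1\|=\|V_2\|\le 1$ such that $u(a)=V_1^*\pi(a)V_2$ for all $a\in A$ (rescaling the standard factorization so that both $V_i$ have norm $\le \|u\|_{cb}^{1/2}$). Set
$$S_1(a)=V_1^*\pi(a)V_1,\qquad S_2(a)=V_2^*\pi(a)V_2.$$
Both are in $CP(A,B(H))$ with $\|S_i\|\le \|u\|_{cb}$. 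Now introduce $W:H\oplus H\to K$ by $W(h_1\oplus h_2)=V_1h_1+V_2h_2$. A direct computation, using $u_*(a)=u(a^*)^*=V_2^*\pi(a)V_1$, shows that
$$W^*\pi(a)W=\begin{pmatrix} S_1(a) & u(a)\\ u_*(a) & S_2(a)\end{pmatrix},$$
so the map $V$ of \eqref{d12} equals $a\mapsto W^*\pi(a)W$ and is therefore completely positive into $M_2(B(H))$. By the definition \eqref{d11} this proves $\|u\|_{dec}\le \max\{\|S_1\|,\|S_2\|\}\le \|u\|_{cb}$, and in particular $u\in D(A,B(H))$.

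For the injective case, I would reduce to the previous one by the standard projection trick. Fix a faithful representation $B\subset B(H)$ and, using injectivity of $B$, a completely positive contractive projection $P:B(H)\to B$. Given $u\in CB(A,B)$, view it as a cb map $\iota u:A\to B(H)$ with the same cb-norm. By the $B(H)$ case, choose $S_1',S_2'\in CP(A,B(H))$ with $\max\{\|S_1'\|,\|S_2'\|\}\le \|u\|_{cb}$ rendering $\begin{pmatrix} S_1' & u\\ u_* & S_2'\end{pmatrix}$ completely positive into $M_2(B(H))$. Applying $P$ entrywise (i.e. composing with the cp map $M_2(P):M_2(B(H))\to M_2(B))$, and using $P|_B=\mathrm{id}_B$ so that $Pu=u$ and $Pu_*=u_*$, we obtain $S_j:=PS_j'\in CP(A,B)$ with $\|S_j\|\le \|u\|_{cb}$ and the corresponding matrix $V$ is still cp into $M_2(B)$. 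This witnesses $\|u\|_{dec}\le \|u\|_{cb}$, and combined with \eqref{ed89} yields the claimed equality.

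The only nontrivial input is the Stinespring factorization for cb maps into $B(H)$ (equivalently, Wittstock's extension/decomposition theorem); everything else is bookkeeping. The potential subtlety to watch for is ensuring that the off-diagonal entries really are $u$ and $u_*$ in the right positions, which is why one takes $u(a)=V_1^*\pi(a)V_2$ (rather than $V_2^*\pi(a)V_1$) so that the upper-right entry of $W^*\pi(a)W$ is exactly $u(a)$.
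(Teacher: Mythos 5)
Your proof is correct, and it is essentially the standard argument: the paper gives no proof of Proposition \ref{prodec} itself but defers to Haagerup's \cite{Ha}, where the $B(H)$ case is handled exactly as you do, via the factorization $u(a)=V_1^*\pi(a)V_2$ and the completely positive map $a\mapsto W^*\pi(a)W$, with the injective case following by composing with a completely positive contractive projection $P:B(H)\to B$.
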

 
 \begin{lem}[Decomposable maps into a direct sum] \label{a92}
  Let $A$ and  $(B_i)_{i\in I}$
 be   $C^*$-algebras
 and let $B=(\oplus\sum\nl_{i\in I} B_i)_\infty$.
 Let $u:  A \to B$. We denote $u_i=p_i u:  A \to  B_i$.
 Then $u\in D( A , B)$ if 
 only if
 all the $u_i$'s are decomposable 
 with $\sup\nl_{i\in I} \|u_i\|_{dec}<\infty$
 and we have
\begin{equation}\label{a93}\|u\|_{dec}=\sup\nl_{i\in I} \|u_i\|_{dec}.\end{equation}
 \end{lem}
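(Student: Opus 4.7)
\medskip
\noindent\textbf{Proof plan for Lemma \ref{a92}.}

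The plan is to prove the two inequalities in \eqref{a93} separately, exploiting that both $M_2(B) = (\oplus\sum_{i\in I} M_2(B_i))_\infty$ and positivity in $B$ are coordinatewise phenomena.

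For the inequality $\sup_i \|u_i\|_{dec} \le \|u\|_{dec}$ (including the assertion that each $u_i$ is decomposable), I would simply observe that the coordinate projection $p_i: B \to B_i$ is a $*$-homomorphism, hence $p_i \in CP(B,B_i)$ with $\|p_i\|_{dec} = \|p_i\|_{cb} = \|p_i\| = 1$ by \eqref{x9}. Since $u_i = p_i u$, the multiplicativity of $\|\cdot\|_{dec}$ under composition in Proposition \ref{pdec}(ii) gives $u_i \in D(A,B_i)$ with $\|u_i\|_{dec} \le \|u\|_{dec}$, and taking the sup yields the bound.

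The substantive direction is $\|u\|_{dec} \le M$ where $M := \sup_i \|u_i\|_{dec} < \infty$. Fix $\varepsilon > 0$. For each $i$, by definition of the dec-norm there exist $S_1^{(i)}, S_2^{(i)} \in CP(A,B_i)$ with $\max\{\|S_1^{(i)}\|,\|S_2^{(i)}\|\} \le \|u_i\|_{dec} + \varepsilon \le M + \varepsilon$ such that the $2\times 2$ matrix
$$V_i = \begin{pmatrix} S_1^{(i)} & u_i \\ (u_i)_* & S_2^{(i)} \end{pmatrix}$$
is completely positive from $A$ to $M_2(B_i)$. The uniform bound on $\|S_k^{(i)}\|$ in $i$ allows us to paste: define $S_k : A \to B$ coordinatewise by $p_i S_k = S_k^{(i)}$ for $k=1,2$. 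Then $\|S_k(a)\|_B = \sup_i \|S_k^{(i)}(a)\|_{B_i} \le (M+\varepsilon)\|a\|$, so $S_k$ is well-defined with $\|S_k\| \le M+\varepsilon$. Because positivity (and complete positivity, via identifying $M_n(B) = (\oplus_i M_n(B_i))_\infty$) in $\ell_\infty$-direct sums is coordinatewise, each $S_k$ is c.p.

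Finally, assemble $V = \begin{pmatrix} S_1 & u \\ u_* & S_2 \end{pmatrix}: A \to M_2(B) = (\oplus_i M_2(B_i))_\infty$. Its $i$-th coordinate is exactly $V_i$, which is c.p.; hence $V$ itself is c.p. by the coordinatewise criterion. This witnesses $\|u\|_{dec} \le \max\{\|S_1\|,\|S_2\|\} \le M + \varepsilon$, and letting $\varepsilon \to 0$ completes the proof.

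The main (minor) obstacle is purely bookkeeping: one must verify that complete positivity of a map into an $\ell_\infty$-direct sum is equivalent to complete positivity of each coordinate, i.e. that $M_n((\oplus_i B_i)_\infty) = (\oplus_i M_n(B_i))_\infty$ as $C^*$-algebras with coordinatewise positivity. This is standard and is the reason the uniform bound $M < \infty$, rather than mere decomposability of each $u_i$, is exactly what lets the construction go through.
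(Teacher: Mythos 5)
Your proof is correct and is essentially the standard argument (the paper itself omits the proof of Lemma \ref{a92}, deferring to Haagerup's \cite{Ha}): the easy direction via $\|p_i\|_{dec}=1$ and submultiplicativity, and the converse by pasting near-optimal witnesses $S_1^{(i)},S_2^{(i)}$ coordinatewise, using that positivity in $(\oplus\sum_{i} M_n(B_i))_\infty$ is checked coordinatewise. No gaps.
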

  
  In the von Neumann algebra setting, the next lemma will be useful.
  \begin{lem}[Decomposability extends to the bidual]\label{a276}
 Let $u: A \to M$ be a linear map from a $C^*$-algebra $A$ to a von Neumann algebra $M$.
Then $u\in D(A,M)\Rightarrow {\ddot u}\in D(A^{**}, M)$ and $\|\ddot{u}\|_{dec}=\| u\|_{dec}$.
\end{lem}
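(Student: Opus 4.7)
\emph{Proof proposal.} The plan is to prove $\|\ddot u\|_{dec}=\|u\|_{dec}$ by the two inequalities separately, using Theorem \ref{bidext}(ii) applied to a $2\times 2$ amplification of $u$ rather than chasing individual decompositions $u=u_1-u_2+i(u_3-u_4)$.

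The inequality $\|u\|_{dec}\le\|\ddot u\|_{dec}$ is immediate: the canonical inclusion $i_A:A\hookrightarrow A^{**}$ is a $*$-homomorphism, hence c.p.\ and contractive, so $\|i_A\|_{dec}=1$ by Lemma \ref{d60}(i); since $u=\ddot u\circ i_A$, the multiplicativity \eqref{d45} of the dec-norm gives the claim.

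For the reverse inequality I would first invoke the remark following \eqref{d12}, which guarantees that the infimum in \eqref{d11} is attained when the range is a von Neumann algebra. So pick $S_1,S_2\in CP(A,M)$ with $\max\{\|S_1\|,\|S_2\|\}=\|u\|_{dec}$ making
$$V=\begin{pmatrix} S_1 & u\\ u_* & S_2\end{pmatrix}:A\longrightarrow M_2(M)$$
completely positive. Since $M_2(M)$ is again a von Neumann algebra, Theorem \ref{bidext}(ii) produces $\ddot V:A^{**}\to M_2(M)$ which is c.p.\ with $\|\ddot V\|=\|V\|$. The crux is to identify entrywise
$$\ddot V=\begin{pmatrix} \ddot{S_1} & \ddot u\\ (\ddot u)_* & \ddot{S_2}\end{pmatrix}.$$
Given this identification and the equalities $\|\ddot{S_j}\|=\|S_j\|$ (Theorem \ref{bidext}(ii) again), $\ddot V$ is an admissible c.p.\ witness for $\ddot u$, yielding $\|\ddot u\|_{dec}\le\max\{\|S_1\|,\|S_2\|\}=\|u\|_{dec}$.

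The main obstacle is precisely this entrywise identification, which has two components. First, one must verify that the bidual is computed entrywise: unpacking $\ddot V=(V^*|_{(M_2(M))_*})^*$ and using that every normal functional on $M_2(M)$ is a $2\times 2$ array of elements of $M_*$ (via the trace-pairing on $M_2$) shows that $V^*$ respects this block decomposition, so $(\ddot V)_{ij}=\ddot{V_{ij}}$. Second, one needs $\ddot{u_*}=(\ddot u)_*$. The slickest route is to observe that since $V$ is c.p., it is in particular self-adjoint, hence so is $\ddot V$; combined with diagonal entries $\ddot S_1,\ddot S_2$, self-adjointness forces the $(2,1)$-entry to equal $(\ddot u)_*$, and the entrywise identification above gives it simultaneously as $\ddot{u_*}$. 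Alternatively this can be checked directly by extending the involution on $A$ to the (weak-$*$ continuous) involution on $A^{**}$ and computing. Everything else follows formally from results already collected in the excerpt.
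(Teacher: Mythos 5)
Your proof is correct. Note that the paper does not actually prove Lemma \ref{a276}: the statement also appears as part (iv) of Theorem \ref{bidext}, which ``gathers well known facts,'' and the section on decomposable maps closes by referring to Haagerup \cite{Ha} for detailed proofs; so there is no in-text argument to compare yours against. What you give is the standard argument one would expect there, and the two delicate points are handled properly: the identification $(\ddot V)_{ij}=\ddot{V_{ij}}$ does follow from the fact that every normal functional on $M_2(M)$ decomposes as a $2\times 2$ array of elements of $M_*$, and $\ddot{u_*}=(\ddot u)_*$ follows either from the self-adjointness of the c.p.\ map $\ddot V$ (your first route) or from weak* continuity of the involutions on $A^{**}$ and $M$ together with weak* density of $A$ in $A^{**}$ (your second route). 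Two minor remarks. First, you do not need the infimum in \eqref{d11} to be attained: choosing $S_1,S_2$ with $\max\{\|S_1\|,\|S_2\|\}\le \|u\|_{dec}+\varepsilon$ and letting $\varepsilon\to 0$ gives the same conclusion and avoids relying on the (unproved in this paper) remark about attainment, which is in any case usually established by exactly the kind of weak* compactness argument you are in the middle of. Second, in the easy direction the inequality $\|u\|_{dec}\le\|\ddot u\|_{dec}\,\|i_A\|_{dec}$ presupposes $\ddot u\in D(A^{**},M)$ (otherwise it is vacuous with $\|\ddot u\|_{dec}=\infty$), so strictly speaking the decomposability of $\ddot u$ is delivered only by your second half; this is harmless but worth stating.
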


  The next statement provides us with simple examples of decomposable maps; actually it can be shown  that  \eqref{de99'} is somewhat optimal, see \eqref{da8} below.
  \begin{pro}\label{da7}
 \begin{itemize}
 \item[{\rm (i)}]   Let
$u\colon \ A\to A$ be defined by $u(x) = a^* xb$ with $a,b\in A$, 
then $$\|u\|_{ dec} \le \|a\|\ \|b\|.$$
   \item[{\rm (ii)}] Let $u\colon \ M_n\to A$ be a linear mapping into
a
$C^*$-algebra. Assume that   $u(e_{ij}) = a_i^* b_j$ with $a_i,b_j$
in $A$.   Let $\|a\|_C=\left\|\sum a^*_ia_i\right\|^{1/2}$. Then  
  \begin{equation}\label{de99} 
\|u\|_{ dec} \le \|a\|_C\ \|b\|_C.\end{equation}
\item[{\rm (iii)}] More generally, if $u(e_{ij}) = \sum_{1\le k\le m} a_{ki}^* b_{kj}$ with $a_{ki},b_{kj}$
in $A$    then
\begin{equation}\label{de99'} 
\|u\|_{ dec} \le \|\sum\nl_{ki} a_{ki}^* a_{ki}\|^{1/2} \ \|\sum\nl_{kj} b_{kj}^* b_{kj}\|^{1/2}.\end{equation}
\end{itemize}
  \end{pro}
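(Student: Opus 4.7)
The plan for all three parts is the same: construct explicit candidate c.p.\ maps $S_1,S_2$ and verify that the resulting $V$ in \eqref{d12} is c.p.\ by exhibiting it in a manifestly Stinespring-like form, then balance the two norms by a scalar rescaling.

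For (i), take a parameter $\lambda>0$ and consider the row $W=(\lambda a,\ \lambda^{-1}b)\in M_{1,2}(A)$. The map $x\mapsto W^*xW$ is c.p.\ (conjugation is c.p.) and equals
$$W^*xW=\begin{pmatrix} \lambda^{2}a^*xa & a^*xb\\ b^*xa & \lambda^{-2}b^*xb\end{pmatrix}.$$
Hence $S_1(x)=\lambda^{2}a^*xa$ and $S_2(x)=\lambda^{-2}b^*xb$ make $V$ c.p., with $\|S_1\|\le\lambda^{2}\|a\|^{2}$ and $\|S_2\|\le\lambda^{-2}\|b\|^{2}$. Choosing $\lambda^{2}=\|b\|/\|a\|$ (the case $a=0$ or $b=0$ being trivial) balances these at $\|a\|\,\|b\|$.

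For (ii) and (iii) simultaneously, write the data as vectors with entries in $A$: for (ii) let $a=(a_{1},\dots,a_{n})\in M_{1,n}(A)$ and similarly $b$; for (iii) stack the $a_{ki}$ into column vectors $A_i=(a_{ki})_{k=1}^{m}\in M_{m,1}(A)$ and likewise $B_j$, so that $u(e_{ij})=A_i^*B_j$. Define
$$S_1(e_{ij})=A_i^*A_j,\qquad S_2(e_{ij})=B_i^*B_j.$$
The standard criterion says $S\colon M_n\to A$ is c.p.\ iff the Choi matrix $(S(e_{ij}))_{ij}\in M_n(A)$ is positive; here $(A_i^*A_j)_{ij}$ is the Gram matrix of $A_1,\dots,A_n$, hence manifestly positive (it equals $\alpha^*\alpha$ for $\alpha=(A_1,\dots,A_n)$). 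Similarly for the block map,
$$V(e_{ij})=\begin{pmatrix} A_i^*A_j & A_i^*B_j\\ B_i^*A_j & B_i^*B_j\end{pmatrix}=C_i^*C_j,\qquad C_i=(A_i\ \ B_i)\in M_{m,2}(A),$$
so $(V(e_{ij}))_{ij}\in M_n(M_2(A))$ is again a Gram matrix and is positive; thus $V$ is c.p. Since $S_1$ and $S_2$ are c.p., $\|S_1\|=\|S_1(1_{M_n})\|=\|\sum_i A_i^*A_i\|=\|\sum_{k,i}a_{ki}^*a_{ki}\|$ and similarly for $S_2$. Rescaling $A_i\leadsto\lambda A_i$, $B_j\leadsto\lambda^{-1}B_j$ (which preserves $u$) and optimizing $\lambda$ as in (i) gives \eqref{de99'}, and specializes to \eqref{de99} in the scalar case $m=1$.

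The only real content is the positivity of the Choi matrix for $V$, which I expect to be the (mild) obstacle; all three bounds then reduce to a scalar AM--GM-type optimization once $V$ has been factored as $C^*(\,\cdot\,)C$ (or as a Gram matrix in the $M_n$ setting). No deeper ingredient is needed beyond the definition \eqref{d11}--\eqref{d12}, basic Stinespring/conjugation c.p.\ maps, and the Choi-matrix criterion for complete positivity on $M_n$.
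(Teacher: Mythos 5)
Your proof is correct and follows essentially the same route as the paper: both construct the $2\times 2$ matrix map $V$ with diagonal entries $a^*xa$, $b^*xb$ (suitably rescaled) and off-diagonal $u$, $u_*$, verify that $V$ is completely positive, and finish by the homogeneity/balancing argument. The only cosmetic difference is that you certify complete positivity of $V$ via positivity of its Choi (Gram) matrix $(C_i^*C_j)_{ij}$, whereas the paper exhibits the explicit factorization $V(x)=t^*\bigl(\begin{smallmatrix}x&0\\0&x\end{smallmatrix}\bigr)t$ and, in (iii), sums the c.p.\ maps $V_k$; both verifications are standard and equivalent in substance.
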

\begin{proof} 
(i)  
Let $V\colon \ A\to M_2(A)$ be the mapping defined by
$$V(x) =\left( \begin{matrix} a^*xa&a^*xb\cr b^*xa&b^*xb   \end{matrix}\right).$$
An elementary verification shows that 
$ V(x)  = t^* \left(\begin{matrix}   x&0\cr 0&x\cr  \end{matrix}\right)t   $
where
$t  = 2^{-1/2} \left(\begin{matrix}a&b\\ a&b\\
\end{matrix}\right).
 $
Clearly this shows that $V$ is c.p.  hence by definition of the dec-norm
  we have
$$\|u\|_{ dec} \le \max\{\|V_{11}\|, \|V_{22}\|\}$$
where $V_{11}(x) = a^*xa$ and $V_{22}(x) = b^*xb$. Thus we obtain
$\|u\|_{ dec} \le \max\{\|a\|^2, \|b\|^2\}.$
Applying this to the mapping $x\to u(x) \|a\|^{-1} \|b\|^{-1}$ we find
$\|u\|_{ dec} \le \|a\|\ \|b\|.$  
\\ (ii) Let $a^* = (a^*_1,a^*_2,\ldots, a^*_n), b^* =
(b^*_1,b^*_2,\ldots, b^*_n)$ viewed as  row matrices    with entries in
$A$ (so that $a$ and $b$ are column matrices). 
By homogeneity, it suffices to prove
\eqref{de99} assuming $\|a\|_C=\|a\|_C=1$.
Then, for any $x$ in $M_n$, $u(x)$ can be written as a matrix product:
$$u(x) = a^*xb.$$
We
  again introduce the mapping $V\colon \ M_n\to M_2(A)$ defined by
$V(x) = \left(\begin{matrix} a^*xa&a^*xb \\ b^*xa&  b^*xb  \end{matrix} \right).$
Again we note
$V(x) = t^*\left(\begin{matrix} x&0\\ 0&x  \end{matrix}\right)  t$
where $t = 2^{-1/2}\left(\begin{matrix}a&b\\ a&b  \end{matrix}\right) \in M_{2n \times 2}(A)$ which
shows   that $V$ is c.p.  so we obtain   
$\|u\|_{ dec} \le \max\{\|V_{11}\|, \|V_{22}\|\} \le \max\{\|b\|^2,
\|a\|^2\} = \max \{ \|\sum b^*_jb_j \| ,  \|\sum
a^*_ia_i \| \}$
and by homogeneity this yields \eqref{de99}.
\\ 
(iii) We have $u=\sum u_k$
where $u_k: M_n \to A$ is defined by $u_k(e_{ij})= a_{ki}^* b_{kj}$.
Let $V_k\colon \ M_n\to M_2(A)$ be associated to $u_k$ as in (ii).
Let $\cl V=\sum V_k$. Clearly $\cl V$  is c.p. and hence
$$\|u\|_{ dec} \le \max\{\|\cl V_{11}\|, \|\cl V_{22}\|\}= \max\{\|\cl V_{11}(1)\|, \|\cl V_{22}(1)\|\},$$
which yields \eqref{de99'}, since by homogeneity
   we may asssume  
$\|(a_{ki})\|_C=\|(b_{ki})\|_C=1$.
  \end{proof}

\begin{pro}\label{prodec1} Let $A,B,C$ be $C^*$-algebras.
 For any
 $u\in D(A,B) $    \begin{equation}\label{e89}
  \forall x\in C\otimes A \quad
\|(Id_{C}\otimes u)(x)\|_{C\otimes_{\max} B} \le \|u\|_{dec} \|x\|_{C\otimes_{\max} A}.
\end{equation}
Moreover,   the mapping
$  Id_{C}\otimes u:  C\otimes_{\max} A\to C\otimes_{\max} B  $
is decomposable  and its norm satisfies
 \begin{equation}\label{e89b}\|  Id_{C}\otimes u\|_{D(   C\otimes_{\max} A , C\otimes_{\max} B  )} \le  \|u\|_{dec} .\end{equation}
 A fortiori,
 \begin{equation}\label{x8}\|  Id_{C}\otimes u :   C\otimes_{\max} A  \to C\otimes_{\max} B  \| \le  \|u\|_{dec} .\end{equation}
 \end{pro}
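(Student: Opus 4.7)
The plan is to reduce everything to the two basic functorial properties of $\otimes_{\max}$: first, that a c.p.\ map $w: A \to B$ induces a c.p.\ map $Id_C \otimes w : C \otimes_{\max} A \to C \otimes_{\max} B$ with norm $\|w\|$; and second, the natural $C^*$-isomorphism $C \otimes_{\max} M_2(B) \cong M_2(C \otimes_{\max} B)$. Given these, the strategy is to lift the matrix trick \eqref{d12} that defines the dec-norm across the tensor product.

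Concretely, fix $\varepsilon > 0$ and choose $S_1, S_2 \in CP(A,B)$ with
$$V = \begin{pmatrix} S_1 & u \\ u_* & S_2 \end{pmatrix} \in CP(A, M_2(B))$$
and $\max\{\|S_1\|, \|S_2\|\} \le \|u\|_{dec} + \varepsilon$. First I would check (on elementary tensors) that $(Id_C \otimes u)_* = Id_C \otimes u_*$, so that under the identification $C \otimes M_2(B) \cong M_2(C \otimes B)$, the map $Id_C \otimes V$ is exactly
$$\widetilde V = \begin{pmatrix} Id_C \otimes S_1 & Id_C \otimes u \\ (Id_C \otimes u)_* & Id_C \otimes S_2 \end{pmatrix}.$$
Since $V$ is c.p., applying the first functorial property yields that $Id_C \otimes V$ extends to a c.p.\ map $C \otimes_{\max} A \to C \otimes_{\max} M_2(B)$; the $C^*$-identification then transfers this to a c.p.\ map $\widetilde V : C \otimes_{\max} A \to M_2(C \otimes_{\max} B)$. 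Similarly $Id_C \otimes S_i$ extends to a c.p.\ map $C \otimes_{\max} A \to C \otimes_{\max} B$ with norm $\|S_i\|$.

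Reading off the definition \eqref{d11} of the dec-norm applied to the extended map $Id_C \otimes u : C \otimes_{\max} A \to C \otimes_{\max} B$, the witnesses $Id_C \otimes S_1, Id_C \otimes S_2$ together with the c.p.\ matrix $\widetilde V$ give
$$\|Id_C \otimes u\|_{D(C\otimes_{\max} A, C\otimes_{\max} B)} \le \max\{\|Id_C \otimes S_1\|, \|Id_C \otimes S_2\|\} \le \|u\|_{dec} + \varepsilon.$$
Letting $\varepsilon \to 0$ gives \eqref{e89b}. Then \eqref{e89} and \eqref{x8} follow immediately from Proposition \ref{pdec}(i), which gives $\|\cdot\|_{cb} \le \|\cdot\|_{dec}$, and hence the ordinary operator norm is dominated by the dec-norm.

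The only delicate step is the passage $C \otimes_{\max} M_2(B) \cong M_2(C \otimes_{\max} B)$ combined with the c.p.\ extension for $V$; this is where the paper's standing hypothesis that all the results on decomposable maps are developed with the max-tensor product (and the reference to \cite{P5} for foundational facts) does the real work. Everything else is a direct verification from the definition of $\|\cdot\|_{dec}$.
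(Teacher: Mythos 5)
Your proof is correct and follows the standard route: the paper itself gives no proof of Proposition \ref{prodec1} (it defers to Haagerup \cite{Ha}), and your argument --- transporting a c.p.\ matrix $V$ witnessing the dec-norm through the functoriality of $\otimes_{\max}$ under c.p.\ maps and the identification $C\otimes_{\max}M_2(B)\cong M_2(C\otimes_{\max}B)$, then reading off the corners --- is exactly the argument found there. The one point worth stating explicitly is that the continuous extension of $Id_C\otimes u$ to $C\otimes_{\max}A$ is itself produced as the $(1,2)$-corner of the extended c.p.\ map $\widetilde V$, which you use implicitly when you ``read off'' the definition \eqref{d11}.
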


  \begin{cor}\label{cordec2} Let $u_j\in D(A_j,B_j)$ ($j=1,2$)
  be decomposable mappings between $C^*$-algebras.
  Then $u_1\otimes u_2$ extends
  to a decomposable mapping in 
  $D(A_1\otimes_{\max} A_2, B_1\otimes_{\max} B_2)$
  such that
  \begin{equation}\label{eqdec2}
  \|u_1\otimes u_2\|_{   D(A_1\otimes_{\max} A_2, B_1\otimes_{\max} B_2)}
   \le   \|u_1 \|_{dec}   \|u_2\|_{dec}. \end{equation}
  \end{cor}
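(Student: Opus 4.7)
The standard move is to factor the tensor product map as a composition of two maps in each of which only one factor is nontrivial, and then invoke Proposition \ref{prodec1} (which handles the case $Id_C\otimes u$) together with the submultiplicativity of the dec-norm under composition from Proposition \ref{pdec}(ii). Concretely, I will write
$$u_1\otimes u_2 \;=\; (u_1\otimes Id_{B_2})\circ (Id_{A_1}\otimes u_2),$$
interpreted as a composition of maps
$$A_1\otimes_{\max} A_2 \;\xrightarrow{\,Id_{A_1}\otimes u_2\,}\; A_1\otimes_{\max} B_2 \;\xrightarrow{\,u_1\otimes Id_{B_2}\,}\; B_1\otimes_{\max} B_2.$$
Both factor maps extend to the max tensor products: by Proposition \ref{prodec1} (taking $C=A_1$, $u=u_2$) the first is decomposable with dec-norm $\le \|u_2\|_{dec}$, and (taking $C=B_2$, $u=u_1$, together with the canonical $*$-isomorphism $A_1\otimes_{\max} B_2\cong B_2\otimes_{\max} A_1$ implementing the flip of tensor factors) the second is decomposable with dec-norm $\le \|u_1\|_{dec}$.

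Composing and applying \eqref{d45} from Proposition \ref{pdec}(ii) then yields $u_1\otimes u_2\in D(A_1\otimes_{\max} A_2,\,B_1\otimes_{\max} B_2)$ with
$$\|u_1\otimes u_2\|_{dec}\;\le\;\|u_1\otimes Id_{B_2}\|_{dec}\cdot\|Id_{A_1}\otimes u_2\|_{dec}\;\le\;\|u_1\|_{dec}\,\|u_2\|_{dec},$$
which is \eqref{eqdec2}. There is no serious obstacle: the only points that need a line of justification are that the flip $A_1\otimes_{\max} B_2 \to B_2\otimes_{\max} A_1$ is an isometric $*$-isomorphism (so that applying Proposition \ref{prodec1} in this form is legitimate and preserves the dec-norm, cf.\ Lemma \ref{d60}(i)), and that the two slice maps agree on elementary tensors with $u_1\otimes u_2$, which is immediate. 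The main ingredient doing the real work has already been packaged in Proposition \ref{prodec1}; Corollary \ref{cordec2} is simply the two-sided version obtained by iteration.
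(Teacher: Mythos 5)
Your proof is correct and is exactly the intended derivation: the paper states Corollary \ref{cordec2} without proof immediately after Proposition \ref{prodec1}, and the standard factorization $u_1\otimes u_2=(u_1\otimes Id_{B_2})\circ(Id_{A_1}\otimes u_2)$ combined with \eqref{e89b}, the flip $*$-isomorphism of the max tensor product, and the submultiplicativity \eqref{d45} is precisely how it is meant to follow.
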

   
  When the mapping $u$ has finite 
  rank then a stronger result holds. We can go $\min\to \max$:
  \begin{pro}\label{findec}
 Let
 $u\in D(A,B) $ be a  \emph{finite rank} map between $C^*$-algebras.  For
 any $C^*$-algebra $C$
 we have 
  \begin{equation}\label{e89fr}
   \forall x\in C\otimes A \quad
\|(Id_{C}\otimes u)(x)\|_{C\otimes_{\max} B} \le \|u\|_{dec} \|x\|_{C\otimes_{\min} A}.
\end{equation}
 \end{pro}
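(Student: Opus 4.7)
My plan is to factor the finite rank decomposable map $u\colon A\to B$ through a matrix algebra $M_N$ and then exploit the nuclearity of $M_N$ to absorb the improvement from $\otimes_{\max}$ to $\otimes_{\min}$ on the source. Specifically, I would try to produce a factorization
\[
 u = w\circ v,\qquad v\colon A\to M_N\ \text{c.p. (in particular c.b.)},\qquad w\colon M_N\to B\ \text{decomposable},
\]
with $\|v\|_{cb}\,\|w\|_{dec}\le (1+\varepsilon)\|u\|_{dec}$ for arbitrary $\varepsilon>0$.

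Granted such a factorization, the conclusion is a routine composition. Since $M_N$ is nuclear, $C\otimes_{\min}M_N=C\otimes_{\max}M_N$ isometrically, so $Id_{C}\otimes v$ is bounded from $C\otimes_{\min} A$ to $C\otimes_{\max} M_N$ with norm at most $\|v\|_{cb}$. By Proposition \ref{prodec1} applied to $w$, the map $Id_{C}\otimes w$ is bounded from $C\otimes_{\max} M_N$ to $C\otimes_{\max} B$ with norm at most $\|w\|_{dec}$. Composing the two and letting $\varepsilon\to 0$ yields the estimate \eqref{e89fr}.

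To build the factorization I would start from the c.p.\ matrix $V=\begin{pmatrix}S_1&u\\ u_*&S_2\end{pmatrix}\colon A\to M_2(B)$ that nearly realizes $\|u\|_{dec}$, and dilate via Stinespring as $V(a)=W^*\sigma(a)W$ with $W=(W_1,W_2)$ and $\sigma\colon A\to B(K)$ a $*$-representation, so that $u(a)=W_1^*\sigma(a)W_2$ and $\|W_j\|^2=\|S_j\|$. The finite-rank hypothesis on $u$ forces the matrix coefficients $a\mapsto\langle\sigma(a)W_2 h,W_1 h'\rangle$ to span an $n$-dimensional subspace of $A^*$, which should let me extract a finite-dimensional subspace of $K$ (of some dimension $N$) onto which to compress $\sigma$, take $v\colon A\to M_N$ to be this compression (c.p., with $\|v\|_{cb}\le 1$), and take $w\colon M_N\to B$ to be the sandwich map built from the compressed $W_1,W_2$. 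The dec-norm of $w$ is then controlled by Proposition \ref{da7}(iii) via the column-sum norms of its matrix entries, which reduce to $\|W_1\|\,\|W_2\|=\|S_1\|^{1/2}\|S_2\|^{1/2}\le\|u\|_{dec}$.

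\textbf{Main obstacle.} The hard part is precisely Step 1, the factorization. The delicate point is to turn the ``effectively finite-dimensional'' structure forced by the finite rank of $u$ into an \emph{exact} equality $u=w\circ v$ with the sharp norm control $\|v\|_{cb}\|w\|_{dec}\le(1+\varepsilon)\|u\|_{dec}$; naively compressing $\sigma$ to a finite-dimensional subspace only recovers $u$ up to error terms involving $W_1,W_2$ on the orthogonal complement, and circumventing this requires a careful choice of the compression (and possibly a preliminary replacement of $S_1,S_2$ by finite-rank approximants). Without the finite-rank assumption only the weaker max-max estimate of Proposition \ref{prodec1} is available, so this is exactly the place where the hypothesis must do real work.
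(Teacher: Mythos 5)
There is a genuine gap: your entire argument rests on the factorization $u=w\circ v$ through $M_N$ with $\|v\|_{cb}\,\|w\|_{dec}\le(1+\varepsilon)\|u\|_{dec}$, and you do not prove it — you correctly identify it as ``the hard part'' and note yourself that compressing the Stinespring dilation only recovers $u$ up to error terms (think of a rank one map $a\mapsto f(a)b_0$ with $f$ a state whose GNS representation is infinite dimensional: finite rank of $u$ puts no finiteness constraint on $\sigma$). Worse, this factorization step is not a technical detail one can expect to fill in routinely: the question of when finite rank maps between $C^*$-algebras factor through matrix algebras with control of exactly this kind of norm is the subject of the Junge--Le Merdy paper \cite{JLM}, and such factorization theorems are normally \emph{deduced from} duality arguments built on estimates like \eqref{e89fr}, not used to prove them. (The converse inequality $\|u\|_{dec}\le\|v\|_{cb}\|w\|_{dec}$ does hold, since $\|v\|_{dec}=\|v\|_{cb}$ for maps into the injective algebra $M_N$, so your claim amounts to an exact identification of $\|u\|_{dec}$ with a matrix-factorization norm; establishing that is at least as hard as the proposition.) The second half of your plan — nuclearity of $M_N$ plus Proposition \ref{prodec1} — is fine, but it is carrying no weight until Step 1 exists.

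The paper's proof avoids factorization entirely and is much softer. Since $u$ has finite rank, its range lies in a finite-dimensional subspace $F\subset B$ on which the min and max norms of $C\otimes F$ are equivalent; hence $Id_C\otimes u:C\otimes_{\min}A\to C\otimes_{\max}B$ is bounded with \emph{some} constant. The canonical map $q:C\otimes_{\max}A\to C\otimes_{\min}A$ is a surjective $*$-homomorphism, hence a metric surjection taking the open unit ball onto the open unit ball, and $(Id_C\otimes u)\circ q$ is just $Id_C\otimes u$ viewed max-to-max, which has norm $\le\|u\|_{dec}$ by \eqref{x8}. Precomposition with a metric surjection preserves operator norms, so the min-to-max norm equals the max-to-max norm and the sharp constant $\|u\|_{dec}$ comes for free. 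If you want to salvage your approach, you would need to prove the factorization theorem separately; the shorter route is the quotient-map argument above.
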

 \begin{proof} For any finite dimensional subspace $F\subset B$,
 the min and max norms are clearly equivalent on $C\otimes F$.
Thus  since its rank is finite $u$ defines
 a bounded map $Id_{C}\otimes u:  C\otimes_{\min} A\to C\otimes_{\max} B  $.
 That same map has norm at most $ \|u\|_{dec}$
 as a map from $C\otimes_{\max} A$ to $C\otimes_{\max} B  $.
 But since we have a metric surjection
 $q: C\otimes_{\max} A\to C\otimes_{\min} A$ taking the open unit ball onto the open unit ball,
 it follows automatically that
 $$\|Id_{C}\otimes u:  C\otimes_{\min} A\to C\otimes_{\max} B \|= \|Id_{C}\otimes u:  C\otimes_{\max} A\to C\otimes_{\max} B \| \le \|u\|_{dec}.$$ 
 \end{proof}
  
  Essentially all the preceding
  results come from Haagerup's \cite{Ha} where detailed proofs can be found.
   
\section{Dec-norms of mappings versus max-norms of tensors}\label{x3}

\begin{lem}\label{ro27}   Let $\F $ be a free group with (free) generators $(g_i)_{i\in I}$
 and let $U_i=U_{\F}(g_i)\in C^*(\F)$ ($i\in I$).
 We augment $I$ by one element by setting formally  $\dot I= I\cup\{0\}$, 
 and we set $g_0$ equal to the unit in $\F$ so that
 $U_0=U_{\F}(g_0)=1$.
Let
$(x_i)_{i\in \dot I}$ be a finitely supported family in a
$C^*$-algebra $A$ and let $T\colon \ \ell_\infty( {\dot I})\to A$
be the mapping defined by $T((\alpha_i)_{i\in {\dot I}}) =
\sum\nolimits_{i\in {\dot I}} \alpha_i x_i$. Then we have
\begin{equation}\label{impu}
\left\|\sum\nl_{i\in {\dot I}} U_i \otimes
x_i\right\|_{C^*(\F)\otimes _{\rm max} A} = \|T\|_{
dec}. \end{equation}
\end{lem}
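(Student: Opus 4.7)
The plan is to prove the two inequalities in \eqref{impu} separately.

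For the direction $\|\sum U_i \otimes x_i\|_{\mathscr{C}\otimes_{\max} A} \le \|T\|_{dec}$, I will write $\sum U_i \otimes x_i = (Id_{\mathscr{C}} \otimes T)\bigl(\sum U_i \otimes e_i\bigr)$ and apply Proposition \ref{prodec1} (inequality \eqref{x8}) with $C = \mathscr{C}$ to get
$$\Bigl\|\sum U_i \otimes x_i\Bigr\|_{\mathscr{C} \otimes_{\max} A} \le \|T\|_{dec}\cdot \Bigl\|\sum U_i \otimes e_i\Bigr\|_{\mathscr{C} \otimes_{\max} \ell_\infty(\dot I)}.$$
It then remains to verify that the last norm equals $1$. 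For any commuting pair of $*$-representations $\pi\colon \mathscr{C} \to B(K)$ and $\sigma\colon \ell_\infty(\dot I) \to B(K)$, the images $p_i := \sigma(e_i)$ are pairwise orthogonal projections summing to $1_K$ that commute with all unitaries $\pi(U_j)$. A short calculation shows $\bigl(\sum_i \pi(U_i) p_i\bigr)\bigl(\sum_j p_j \pi(U_j)^*\bigr) = \sum_i \pi(U_i) p_i \pi(U_i)^* = \sum_i p_i = 1_K$, and symmetrically for the other product, so $\sum_i \pi(U_i) p_i$ is unitary; the supremum over such pairs gives the norm $1$.

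For the reverse inequality $\|T\|_{dec} \le M$ with $M := \|\sum U_i \otimes x_i\|_{\mathscr{C}\otimes_{\max} A}$, I must exhibit c.p.\ maps $S_1, S_2\colon \ell_\infty(\dot I) \to A$ with $\max\{\|S_1\|, \|S_2\|\} \le M$ such that $V = \begin{pmatrix} S_1 & T \\ T_* & S_2\end{pmatrix}$ is c.p. Setting $a_i := S_1(e_i)$ and $b_i := S_2(e_i)$, the task reduces to producing $a_i, b_i \in A_+$ with $\|\sum a_i\|,\|\sum b_i\| \le M$ and $\begin{pmatrix} a_i & x_i \\ x_i^* & b_i\end{pmatrix} \ge 0$ for every $i$. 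By Lemma \ref{a276} it suffices to produce them in $A^{**}$, where the positivity of each $2\times 2$ block is equivalent to a pointwise factorization $x_i = \xi_i\eta_i^*$ with $a_i = \xi_i\xi_i^*$ and $b_i = \eta_i\eta_i^*$.

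The starting point will be the positive element
$$P := \begin{pmatrix} M \cdot 1 & z \\ z^* & M\cdot 1\end{pmatrix} \in M_2(\mathscr{C} \otimes_{\max} A)_+,$$
justified by $\|z\|_{\max} \le M$. In a faithful representation of $\mathscr{C} \otimes_{\max} A$ on a Hilbert space, the $u_i := \pi(U_i)$ are unitaries commuting with the image of $A$, and the plan is to obtain a factorization $P = \sum_i v_i v_i^*$ with $v_i = \begin{pmatrix} u_i\xi_i \\ \eta_i\end{pmatrix}$ for suitable $\xi_i, \eta_i \in A^{**}$. Then the $(1,2)$-sum $\sum_i u_i \xi_i \eta_i^* = z$ forces $\xi_i \eta_i^* = x_i$ by freeness of the $U_i$, while the $(1,1)$-sum $\sum_i u_i \xi_i\xi_i^* u_i^* = \sum_i \xi_i\xi_i^*$ (using commutation of $u_i^*$ past $\xi_i\xi_i^* \in A^{**}$) is bounded by $M\cdot 1$, giving the required $a_i := \xi_i\xi_i^*$; similarly $b_i := \eta_i\eta_i^*$.

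The main obstacle will be to establish the existence of such a factorization of $P$ with the specific shape $v_i = \begin{pmatrix} u_i\xi_i \\ \eta_i\end{pmatrix}$ indexed by the free generators: this is not a formal consequence of $P \ge 0$, since a generic rank-one decomposition will mix contributions coming from different $u_i$'s. The resolution uses, in an essential way, that the $U_i$ are \emph{free} generators of $\mathscr{C}$ (equivalently, its universal property) together with polar-decomposition arguments in the bidual $A^{**}$, along the lines of Haagerup's construction in \cite{Ha}.
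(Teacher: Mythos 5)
The paper states Lemma \ref{ro27} without proof (it is one of the ingredients deferred to \cite{Ha}, \cite{P4}, \cite{P5}), so there is no in-paper argument to compare against; judged on its own, your proposal establishes only half of \eqref{impu}. The inequality $\|\sum U_i\otimes x_i\|_{\max}\le\|T\|_{dec}$ is done correctly and completely: factoring through $Id_{\mathscr{C}}\otimes T$, invoking \eqref{x8}, and checking that the relevant finite subsum $\sum U_i\otimes e_i$ has max-norm at most $1$ (your unitarity computation is fine; for a proper subset of $\dot I$ one only gets a contraction, which suffices). The reduction of the converse to producing positive $a_i,b_i$ in $A^{**}$ with $\|\sum a_i\|,\|\sum b_i\|\le M$ and $\left(\begin{smallmatrix}a_i & x_i\\ x_i^* & b_i\end{smallmatrix}\right)\ge 0$ is also sound, the passage to $A^{**}$ being justified by Theorem \ref{da50} (which is the citation you want here, rather than Lemma \ref{a276}).

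The gap is exactly the step you flag as ``the main obstacle'': the existence of a decomposition $P=\sum_i v_iv_i^*$ with $v_i=\left(\begin{smallmatrix}u_i\xi_i\\ \eta_i\end{smallmatrix}\right)$, $\xi_i,\eta_i\in A^{**}$. This is not a quotable lemma; it is the entire nontrivial content of \eqref{impu} (equivalently, of the assertion that $\|z\|_{\max}$ dominates the factorization infimum appearing in \eqref{da8'}), so deferring it to ``polar-decomposition arguments along the lines of Haagerup'' is assuming what must be proved. Moreover, the one concrete claim you make about this step is unjustified: in a faithful representation of $\mathscr{C}\otimes_{\max}A$ the identity $\sum u_i\xi_i\eta_i^*=\sum u_i x_i$ does not force $\xi_i\eta_i^*=x_i$. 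The $U_i$ are linearly independent over $A$ inside $\mathscr{C}\otimes A$ because they come from distinct group elements, but once everything is pushed into $B(H)$ with coefficients in $A^{**}$ this injectivity is lost: the normal extension $\ddot\rho$ of the representation of $A$ need not be faithful on $A^{**}$, and the unitaries $u_i=\pi(U_i)$ may satisfy many relations. A route that actually closes the argument is to prove the factorization statement directly: from $\left(\begin{smallmatrix}M & z\\ z^* & M\end{smallmatrix}\right)\ge 0$ in $M_2(\mathscr{C}\otimes_{\max}A)$ one extracts, via a state/GNS (or Cauchy--Schwarz) argument that genuinely uses the universal property of $C^*(\F)$, elements $a_i,b_i\in A^{**}$ with $x_i=a_i^*b_i$, $\|\sum a_i^*a_i\|\le M$ and $\|\sum b_i^*b_i\|\le M$, and then concludes by \eqref{da8'}. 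That extraction is precisely what your proposal leaves open.
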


 For emphasis, we single out the next example, 
which will play an important role in the sequel.
The reader should compare this to the  description of 
 the  unit ball of $CB(\ell_\infty^n,A)$ in  \eqref{eq81} below.
\begin{lem} Consider a linear mapping $T: \ell_\infty^n \to A$  into a  $C^*$-algebra $A$.
Let $ x_j= T(e_j)$ ($1\le j\le n$).
Then
\begin{equation}\label{da8'}
\|T\|_{dec} = \inf\{\|\sum\nl_{j} a_{j}^* a_{j}\|^{1/2}\|\sum\nl_{j} b_{j}^* b_{j}\|^{1/2} \mid  a_j,b_j \in A,\  x_j=a_j^*b_j\}.\end{equation}
Consider a linear mapping $u: M_n \to A$  into a  $C^*$-algebra $A$.
Let ${\mathfrak a}\in M_n(A)$ be the matrix defined by ${\mathfrak a}_{ij}= u(e_{ij})$.
Then
\begin{equation}\label{da8} \|u\|_{dec} = \inf\{\|\sum\nl_{k,j} a_{kj}^* a_{kj}\|^{1/2}\|\sum\nl_{k,j} b_{kj}^* b_{kj}\|^{1/2} \mid  a,b \in M_n(A),\  {\mathfrak a}=a^*b\}.\end{equation}
\end{lem}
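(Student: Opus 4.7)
The plan is to establish both inequalities in each of \eqref{da8'} and \eqref{da8}. The upper bound $(\leq)$ should follow immediately from Proposition~\ref{da7}: in \eqref{da8}, given $\mathfrak{a}=a^{*}b$ with $a,b\in M_{n}(A)$, expanding the matrix product gives $u(e_{ij})=\sum_{k}a_{ki}^{*}b_{kj}$, which is exactly the hypothesis of part~(iii). For \eqref{da8'}, given $x_{j}=a_{j}^{*}b_{j}$, I would extend $T$ to $\tilde{T}\colon M_{n}\to A$ by $\tilde{T}(e_{ij})=a_{i}^{*}b_{j}$; then $\tilde{T}|_{\ell_{\infty}^{n}}=T$, so $\|T\|_{dec}\le\|\tilde{T}\|_{dec}$ (restricting a c.p.\ witness for $\tilde{T}$ to the diagonal subalgebra preserves the norms of $S_{1},S_{2}$), and part~(ii) provides the desired bound on $\|\tilde{T}\|_{dec}$.

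For the lower bound $(\geq)$, fix $c>\|u\|_{dec}$ (resp.~$c>\|T\|_{dec}$) and pick a c.p.\ witness $V=\bigl(\begin{smallmatrix}S_{1} & u\\u_{*} & S_{2}\end{smallmatrix}\bigr)$ with $\max(\|S_{1}\|,\|S_{2}\|)<c$. I plan to read a factorization off the positivity of $V$, using the standard criterion that $\bigl(\begin{smallmatrix}p & x\\x^{*} & r\end{smallmatrix}\bigr)\ge 0$ in a $C^{*}$-algebra implies $x=p^{1/2}\,\gamma\,r^{1/2}$ for a contraction $\gamma$. In \eqref{da8'}, commutativity of $\ell_{\infty}^{n}$ reduces complete positivity of $V$ to $V(e_{j})\ge 0$ in $M_{2}(A)$, giving $x_{j}=S_{1}(e_{j})^{1/2}\,c_{j}\,S_{2}(e_{j})^{1/2}$; setting $a_{j}=S_{1}(e_{j})^{1/2}$ and $b_{j}=c_{j}S_{2}(e_{j})^{1/2}$ yields $x_{j}=a_{j}^{*}b_{j}$ with $\sum_{j}a_{j}^{*}a_{j}=S_{1}(1)$ and $\sum_{j}b_{j}^{*}b_{j}\le S_{2}(1)$, whence $\|\sum a_{j}^{*}a_{j}\|^{1/2}\|\sum b_{j}^{*}b_{j}\|^{1/2}\le\max(\|S_{1}\|,\|S_{2}\|)<c$. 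In \eqref{da8}, the rearranged Choi matrix of $V$ becomes the positive block $\bigl(\begin{smallmatrix}\mathfrak{s}_{1} & \mathfrak{a}\\ \mathfrak{a}^{*} & \mathfrak{s}_{2}\end{smallmatrix}\bigr)$ in $M_{2}(M_{n}(A))$, with $\mathfrak{s}_{\ell}=(S_{\ell}(e_{ij}))_{ij}$; the same criterion at the $M_{n}(A)$-level yields $\mathfrak{a}=\mathfrak{s}_{1}^{1/2}\,c\,\mathfrak{s}_{2}^{1/2}$ for a contraction $c$, and taking $a=\mathfrak{s}_{1}^{1/2}$, $b=c\mathfrak{s}_{2}^{1/2}$ produces $\mathfrak{a}=a^{*}b$ with $\operatorname{tr}(a^{*}a)=S_{1}(1)$ and $\operatorname{tr}(b^{*}b)\le S_{2}(1)$.

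The main technical obstacle will be that the interpolating contraction from the $2\times 2$-positivity criterion, and hence $b_{j}$ (resp.~$b$), naturally lives in the bidual $A^{**}$ rather than in $A$ itself as the lemma requires. I would bridge this by an $\varepsilon$-regularization: replace $S_{\ell}$ by $S_{\ell}+\varepsilon\Psi$ where $\Psi$ is a fixed c.p.\ map chosen so that $S_{\ell}(e_{j})+\varepsilon I$ (resp.~$\mathfrak{s}_{\ell}+\varepsilon I$) becomes invertible in $A$ (resp.~$M_{n}(A)$)---for instance, $\Psi(x)=\mathrm{Tr}(x)\,I$. With invertibility in hand, the contraction can be computed explicitly, namely $c_{j}=(S_{1}(e_{j})+\varepsilon)^{-1/2}\,x_{j}\,(S_{2}(e_{j})+\varepsilon)^{-1/2}\in A$, so the resulting factorization lands in $A$ (resp.~$M_{n}(A)$). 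The bound deteriorates by at most $O(n\varepsilon)$, and sending $\varepsilon\downarrow 0$ followed by $c\downarrow\|u\|_{dec}$ (resp.~$\|T\|_{dec}$) closes the argument.
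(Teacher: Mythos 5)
Your argument is correct and takes what is essentially the standard route: the paper gives no proof of this lemma (it merely cites \cite[p.~257]{P4}), and the argument there is in substance yours --- the upper bounds via Proposition~\ref{da7}\,(ii)--(iii), and the lower bounds by reading a factorization off the positivity of $V(e_j)$ (resp.\ of the canonically shuffled Choi matrix of $V$ in $M_2(M_n(A))$) through the $2\times 2$ criterion, with the $\varepsilon$-perturbation of $S_1,S_2$ correctly resolving the genuine issue that the interpolating contraction would otherwise only live in $A^{**}$. The one caveat is that $a_j=(S_1(e_j)+\varepsilon 1)^{1/2}$ lands in the unitization of $A$ rather than in $A$ when $A$ is non-unital, so strictly speaking your construction settles the unital case (the only one needed in this paper, where the lemma is applied to unital algebras and biduals); the non-unital case requires one further approximation.
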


\begin{proof} See \cite[p. 257]{P4}
for the proof of \eqref{da8'}.The proof of \eqref{da8} is similar.
 %See \cite[p. 258]{P4} (a more general fact appears in \cite[p. 243]{P4}).
\end{proof}

\section{The analogue of local reflexivity for  decomposable maps}
 
In sharp contrast with c.b. maps, we have
\begin{thm}\label{da50} For any $n$ and any $C^*$-algebra $A$, we have natural
isometric identifications
$$D(M_n, A^{**})= D(M_n, A)^{**}
 \text{  and  }
 D(\ell_\infty^n, A^{**})= D(\ell_\infty^n, A)^{**}.$$
\end{thm}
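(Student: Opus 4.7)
The plan is to identify the spaces concretely: by $T\mapsto(T(e_j))_{j=1}^{n}$ we have $D(\ell_\infty^n,A)\cong A^n$, and by $u\mapsto(u(e_{ij}))_{i,j=1}^{n}$ we have $D(M_n,A)\cong M_n(A)$, with the dec-norms described explicitly by \eqref{da8'} and \eqref{da8} respectively; analogously for $A^{**}$. Banach-space local reflexivity yields the underlying isometric identifications $(A^n)^{**}=(A^{**})^n$ and $M_n(A)^{**}=M_n(A^{**})$, so the candidate natural map $\Phi\colon D(W,A)^{**}\to D(W,A^{**})$ (where $W\in\{\ell_\infty^n,M_n\}$) is determined by $\Phi(\xi)(x):=\mathrm{ev}_x^{**}(\xi)\in A^{**}$, with $\mathrm{ev}_x\colon D(W,A)\to A$ the bounded evaluation functional $u\mapsto u(x)$. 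The theorem reduces to showing $\Phi$ is an isometric isomorphism.

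For the inequality $\|\Phi(\xi)\|_{dec}\le\|\xi\|$ I would show that the closed dec-unit-ball of $D(W,A^{**})$ is closed in the pointwise $\sigma(A^{**},A^*)$-topology on $L(W,A^{**})$. Since $A^{**}$ is a von Neumann algebra, the infimum in \eqref{d11} is attained, so this unit ball equals the image, under $(S_1,T,S_2)\mapsto T$, of the set of triples with $S_i\in CP(W,A^{**})$ of norm $\le 1$ such that the matrix in \eqref{d12} lies in $CP(W,M_2(A^{**}))$. Each constraint is weak-$*$ closed: complete positivity is preserved under pointwise weak-$*$ limits, and $\|S_i\|=\|S_i(1)\|$ is weak-$*$ lower semi-continuous on $A^{**}$. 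Moreover the triples lie inside a Banach--Alaoglu compact ball, so the set is weak-$*$ compact and its continuous image (the dec-unit-ball) is weak-$*$ closed. Writing $\xi$ as the weak-$*$ limit of a net $u_\alpha\in D(W,A)$ with $\|u_\alpha\|_{dec}\le\|\xi\|$ via Goldstine, the weak-$*$ continuity of each $\mathrm{ev}_x^{**}$ yields $u_\alpha\to\Phi(\xi)$ pointwise weak-$*$, placing $\Phi(\xi)$ in the closed dec-ball of radius $\|\xi\|$.

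For surjectivity I would apply a Kaplansky density argument to the factorization: given $T\in D(W,A^{**})$ with $\|T\|_{dec}\le 1$ and an attained factorization $(T(e_{ij}))=\mathfrak a^*\mathfrak b$ in $M_n(A^{**})$ with $\|\mathfrak a\|,\|\mathfrak b\|\le 1$ (analogously for $\ell_\infty^n$ via column norms), Kaplansky density for $M_n(A)\subset M_n(A)^{**}=M_n(A^{**})$ produces nets $\mathfrak a_\lambda,\mathfrak b_\lambda\in M_n(A)$ of norm $\le 1$ with $\mathfrak a_\lambda\to\mathfrak a$ and $\mathfrak b_\lambda\to\mathfrak b$ strong-$*$. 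Strong-$*$ convergence with uniform norm bounds forces $\mathfrak a_\lambda^*\mathfrak b_\lambda\to\mathfrak a^*\mathfrak b$ in the strong operator topology, hence entrywise in $\sigma(A^{**},A^*)$. The resulting $u_\lambda\in D(W,A)$ satisfy $\|u_\lambda\|_{dec}\le 1$ (by the formula in $A$) and $u_\lambda\to T$ pointwise weak-$*$; extracting a weak-$*$ cluster point $\xi$ of $(u_\lambda)$ in the compact unit ball of $D(W,A)^{**}$ and using the weak-$*$ continuity of $\Phi$ gives $\Phi(\xi)=T$ with $\|\xi\|\le 1$, so $\Phi$ is a surjective isometry. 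The main technical point is this multiplicative step: weak-$*$ convergence of the factors is not enough since multiplication is not jointly weak-$*$ continuous, and the upgrade to strong-$*$ convergence via Kaplansky density, together with uniform norm bounds, is what makes the approximate factorizations actually converge to $\mathfrak a^*\mathfrak b$.
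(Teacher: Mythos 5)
Your argument is correct and takes essentially the same route as the paper's (very terse) proof: identify $D(W,A^{**})$ and $D(W,A)^{**}$ setwise and compare norms via the explicit dec-norm formulas \eqref{da8'} and \eqref{da8}. The weak$^*$-compactness and Kaplansky-density details you supply for the two inequalities are precisely what the paper leaves implicit.
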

\begin{proof} Note that the spaces $D(\ell_\infty^n, A^{**})$ (resp. $D(M_n, A^{**})$) and $D(\ell_\infty^n, A)^{**}$
(resp. $ D(M_n, A)^{**}$)
 are setwise identical.
The proof that their norms are equal
 uses   \eqref{da8} and \eqref{da8'}.
\end{proof}
 
\section{CB-norms of mappings versus   min-norms of tensors}\label{min}

The first part of the next result
is based on the classical observation that
a unitary representation
$\pi\colon\ \F \to B(H)$ is entirely determined
by its values $u_i=\pi(g_i)$ on the generators,
and if we let $\pi$ run over all possible
unitary representations, then we obtain
all possible families $(u_i)$ of unitary
operators. The second part is also well known.

\begin{lem}\label{pchoi} 
Let $A\subset B(H)$ be a $C^*$-algebra. Let $\F$ be a free group
\index{free group}
with generators $(g_i)_{i\in I}$. Let $U_i=U_{\F}(g_i)\in C^*(\F)$. 
Let $(x_i)_{i\in
I}$ be a   family in $A$ with only finitely many non-zero terms.
Consider  the linear map $T\colon \
\ell_\infty(I)
\to A$ defined by  
$T((\alpha_i)_{i\in I}) = \sum\nolimits_{i\in I}
\alpha_i x_i$. Then we have
\begin{equation}\label{eq81}\left\|\sum\nl_{i\in I}U_i\otimes
x_i\right\|_{C^*(\F)\otimes_{\rm min} A}=
\|T\|_{cb}=
\sup \{\big\|\sum u_i\otimes x_i\big\|_{\rm
min}\}\end{equation} 
where the sup runs over all possible Hilbert spaces $K$ and all
families
$(u_i)$  of  unitaries on $K$. Actually, the latter supremum
remains the same if we restrict it to 
finite dimensional Hilbert spaces $K$.
Moreover, in the case when $A=B(H)$ with
  $\dim(H)=\infty$,
we have 
\begin{equation}\label{eq82}\left\|\sum\nl_{i\in I}U_i\otimes
x_i\right\|_{C^*(\F)\otimes_{\rm min} B(H)}=
\inf\left\{ \left\|\sum y_iy^*_i\right\|^{1/2}
\left\|\sum z^*_iz_i\right\|^{1/2} 
\right\} \end{equation} where the infimum, which runs
over all possible factorizations $x_i=y_iz_i$
with
$y_i,z_i$ in
$B(H)$,  
is  actually attained.\\
Moreover, all this remains true if we enlarge
the family $(U_i)_{i\in I}$ by including
the unit element of  $C^*(\F)$.
  \end{lem}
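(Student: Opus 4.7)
My plan for the chain of equalities in \eqref{eq81} uses matrix amplifications. Since $\ell_\infty(I)$ is commutative, the identification $M_N(\ell_\infty(I)) = \ell_\infty(I; M_N)$ with the sup-norm gives
\[
\|T\|_{cb} = \sup\{\|\textstyle\sum_i \xi_i \otimes x_i\|_{M_N \otimes_{\min} A} :\ N\ge 1,\ \xi_i \in M_N,\ \|\xi_i\|\le 1\}.
\]
By the Russo--Dye theorem the closed unit ball of $M_N$ is the closed convex hull of its unitaries; since the map $(\xi_1,\ldots,\xi_n)\mapsto\|\sum_j \xi_j\otimes x_j\|$ is convex in each variable, the supremum is attained with each $\xi_i$ unitary, giving the version of the sup with finite-dimensional $K$. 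General $K$ then follows by compression to a finite-dimensional subspace. Conversely, families $(u_i)$ of unitaries on $K$ are in bijection, by the universal property of $C^*(\F)$, with unital $*$-representations $\pi:C^*(\F)\to B(K)$ satisfying $\pi(U_i)=u_i$, and by definition the min-norm equals $\sup_\pi \|\sum\pi(U_i)\otimes x_i\|$. Matching the two suprema yields \eqref{eq81}.

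For the infimum formula \eqref{eq82} with $A=B(H)$ I would prove the two inequalities separately. The upper bound is a direct row-column factorization: given $x_i=y_iz_i$, write
\[
\textstyle\sum_i U_i\otimes x_i = \bigl[U_1\otimes y_1,\ldots,U_n\otimes y_n\bigr]\begin{bmatrix} 1\otimes z_1 \\ \vdots \\ 1\otimes z_n \end{bmatrix},
\]
and use $U_iU_i^*=1$ to compute the row norm as $\|\sum y_iy_i^*\|^{1/2}$ and the column norm as $\|\sum z_i^*z_i\|^{1/2}$. The reverse inequality, together with attainment, comes from the Wittstock--Haagerup--Paulsen factorization applied to $T:\ell_\infty(I)\to B(H)$: since $\|T\|_{cb}=\|\sum U_i\otimes x_i\|_{\min}$ by the first part, there exist a $*$-representation $\pi:\ell_\infty(I)\to B(K)$ and operators $V:K\to H$, $W:H\to K$ with $T(\cdot)=V\pi(\cdot)W$ and $\|V\|\|W\|=\|T\|_{cb}$. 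Setting $y_i=V\pi(e_i)$ and $z_i=\pi(e_i)W$ gives $x_i=y_iz_i$, and since the projections $\pi(e_i)$ are pairwise orthogonal, $\sum y_iy_i^*\le VV^*$ and $\sum z_i^*z_i\le W^*W$, yielding $\|\sum y_iy_i^*\|^{1/2}\|\sum z_i^*z_i\|^{1/2}\le \|V\|\|W\|=\|T\|_{cb}$. Combining with the first inequality forces equality, so this particular factorization attains the infimum.

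The final addendum, that the statement remains valid after adjoining $U_0=1$ to the generators, is immediate: the row-column calculation uses only $U_iU_i^*=1$, which holds for the unit, and in every unital representation $U_0$ is automatically mapped to the identity, a legitimate unitary, so the supremum characterization is unchanged. The main obstacle I expect is the reverse direction in \eqref{eq82}: this relies on the Wittstock--Haagerup--Paulsen factorization of c.b.\ maps into $B(H)$ \emph{with equality} $\|V\|\|W\|=\|T\|_{cb}$, which is the deep ingredient and is precisely what is needed for attainment, rather than merely an estimate. Everything else reduces to the universal property of $C^*(\F)$ and elementary convexity.
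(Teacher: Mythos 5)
Your argument is correct and is essentially the standard one; the paper itself gives no proof of this lemma, referring instead to \cite[p.~155]{P4}, where the same route is followed (Russo--Dye plus the universal property of $C^*(\F)$ for \eqref{eq81}, and a row--column factorization plus the Wittstock--Haagerup--Paulsen theorem for \eqref{eq82}). A few points deserve more care. First, ``compression to a finite-dimensional subspace'' is not quite right as stated, since the compression of a unitary is only a contraction; either rerun the Russo--Dye/convexity argument after compressing, or, more directly, bound the supremum over arbitrary $K$ by $\|T\|_{cb}$ by writing $\sum u_i\otimes x_i=(\mathrm{Id}_{B(K)}\otimes T)\big(\sum u_i\otimes e_i\big)$ and noting that $\sum u_i\otimes e_i$ is a contraction in $B(K)\otimes_{\min}\ell_\infty(I)$. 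Second, and more substantively, your factorization produces $y_i=V\pi(e_i)\in B(K,H)$ and $z_i=\pi(e_i)W\in B(H,K)$, whereas the statement requires $y_i,z_i\in B(H)$; this is exactly where the hypothesis $\dim(H)=\infty$, which you never invoke, must enter: after cutting $K$ down to the essential subspace $\overline{\pi(\ell_\infty(I))WH}$ one gets $\dim K\le \dim H$, so an isometry $J:K\to H$ lets you replace $z_i$ by $J\pi(e_i)W$ and $y_i$ by $V\pi(e_i)J^*$ without changing $x_i=y_iz_i$ or the relevant norms. Finally, for the addendum, the supremum defining $\|T\|_{cb}$ on $\ell_\infty(\dot I)$ allows $u_0$ to be an arbitrary unitary, while every unital representation of $C^*(\F)$ sends the unit to $1$; these two suprema agree because one may left-multiply the family by $u_0^*$, which preserves unitarity and the norm, so the two characterizations still match.
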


\begin{proof} See \cite[p. 155]{P4}.
% a more general fact  ?.
\end{proof}

 \section{Multiplicative domains}\label{sec-md}

 \n The unreasonable effectiveness of completely positive contractions
 in $C^*$-algebra theory is partially elucidated by the next 
 statement (due to Choi, based on Kadison's earlier work).
 
\begin{thm}\label{md} Let $u\colon \ A\to B$ be a c.p.  map between 
$C^*$-algebras with $\|u\|\le 1$.
%\begin{itemize}
\item[{\rm (i)}] Then if $a\in A$ satisfies $u(a^*a) = u(a)^*u(a)$, we have 
necessarily $$
u(xa) = u(x) u(a),{\forall x\in A}$$   and
the set
 of such $a$'s forms an algebra.
\item[{\rm (ii)}]Let $D_u = \{a\in A\mid u(a^*a) = u(a)^* u(a)$ and $u(aa^*) = 
u(a)u(a)^*\}$. Then $D_u$ is a $C^*$-subalgebra of $A$
(called the multiplicative domain of $u$) and $u_{|D_u}$ is a $*$-homomorphism.
%\end{itemize}
Moreover, we have
\begin{equation}\label{a16}
\forall a,b\in D_u \ \forall x\in A\quad 
u(ax)=u(a)u(x),\ u(xb) =   u(x)u(b)\ \text{ and }
u(axb) = u(a) u(x)u(b). \end{equation}
\end{thm}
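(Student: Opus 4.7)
The plan is to combine Stinespring's dilation with the sharp Kadison--Schwarz inequality. Embedding $B\subset B(H)$ and (after unitizing $A$ if necessary) applying Stinespring to the c.p.\ contraction $u$, I obtain a Hilbert space $K$, a $*$-homomorphism $\pi\colon A \to B(K)$, and a contraction $V\colon H \to K$ with $u(x) = V^*\pi(x) V$. The decisive identity is then
$$u(a^*a) - u(a)^*u(a) \;=\; V^*\pi(a)^*(I - VV^*)\pi(a)V \;=\; \bigl((I-VV^*)^{1/2}\pi(a)V\bigr)^*\bigl((I-VV^*)^{1/2}\pi(a)V\bigr) \;\ge\; 0,$$
using $I - VV^* \ge 0$ because $V$ is a contraction.

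For (i), the hypothesis forces $(I-VV^*)^{1/2}\pi(a)V = 0$, equivalently $\pi(a)V = VV^*\pi(a)V$. Multiplying on the left by $V^*\pi(x)$ gives, for every $x \in A$,
$$u(xa) = V^*\pi(x)\pi(a)V = V^*\pi(x)\,VV^*\,\pi(a)V = u(x)u(a),$$
which is the desired multiplicativity. Closure of $L_u := \{a : u(a^*a) = u(a)^*u(a)\}$ under sums and products then follows from this identity: sums are handled by expanding $u((a_1+a_2)^*(a_1+a_2))$ and using $u(a_i^*y) = u(a_i)^*u(y)$ (the adjoint of the relation just proved), while $a_1 a_2 \in L_u$ is obtained by iterating $u(xa) = u(x)u(a)$ inside $u((a_1a_2)^*(a_1a_2)) = u(a_2^*a_1^*a_1a_2)$.

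For (ii), running the same Stinespring computation with $a^*$ in place of $a$ (and using that $u$ is $*$-preserving because it is positive) shows that $u(aa^*) = u(a)u(a)^*$ forces $V^*\pi(a) = V^*\pi(a)VV^*$, hence $u(ax) = u(a)u(x)$ for every $x \in A$. Consequently, for $a \in D_u$ both one-sided identities hold. The defining equalities are symmetric under $a \leftrightarrow a^*$ and are jointly norm-continuous, so $D_u$ is automatically closed under involution and norm limits; closure under products follows as in (i). Thus $D_u$ is a $C^*$-subalgebra and $u|_{D_u}$ is a $*$-homomorphism, and \eqref{a16} comes out of one application of each identity, $u(axb) = u(ax)u(b) = u(a)u(x)u(b)$.

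The main obstacle, modest as it is, lies in keeping careful track of which of the two conditions $u(a^*a) = u(a)^*u(a)$ and $u(aa^*) = u(a)u(a)^*$ is required for each multiplicativity statement; the Stinespring picture makes this transparent, since the first corresponds to $\pi(a)V = VV^*\pi(a)V$ (yielding $u(xa) = u(x)u(a)$) and the second to its adjoint (yielding $u(ax) = u(a)u(x)$). Everything else is routine matrix bookkeeping.
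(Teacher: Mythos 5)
Your proof is correct. The paper does not actually prove Theorem \ref{md}; it states it as a known result of Choi (building on Kadison's Schwarz inequality) and defers all such background to \cite{P5}, so there is no in-text argument to compare against. Your route is the standard one: the Stinespring identity $u(a^*a)-u(a)^*u(a)=\bigl((I-VV^*)^{1/2}\pi(a)V\bigr)^*\bigl((I-VV^*)^{1/2}\pi(a)V\bigr)$ correctly reduces the equality case to $(I-VV^*)\pi(a)V=0$, from which the one-sided multiplicativity, the algebra property of $\{a: u(a^*a)=u(a)^*u(a)\}$, the two-sided statements for $D_u$, and \eqref{a16} all follow exactly as you say. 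The only point worth making explicit is the unitization step you wave at: for non-unital $A$ one needs that a c.p.\ contraction extends to a c.p.\ map on $\widetilde A$ with $\widetilde u(1)\le 1$ (so that $V$ is still a contraction and $I-VV^*\ge 0$); this is standard (via an approximate unit and a weak limit) but is the one place where $\|u\|\le 1$ is genuinely used.
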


\section{Module maps in the cyclic case}

\begin{thm}[\cite{Smod}]\label{smith1}
Let $E\subset B(\cl H)$ be an operator space.
Let $u: E \to B(H)$ be a bounded linear map.
Assume that there are  unital
$C^*$-subalgebras  $A_1,A_2 \subset B(\cl H)$ 
and $*$-homomorphisms $\pi_1 :A_1 \to B(H)$
and $\pi_2 :A_2 \to B(H)$
with respect to which $E$ is a bimodule and  $u$ is bimodular,
meaning that for all $a_j\in A_j$ and all $x\in E$ we have
$$a_1xa_2\in E \text{  and  }    u(a_1xa_2)= \pi_1(a_1)u(x)\pi_2(a_2).$$
If $\pi_1$ and $\pi_2$ are cyclic
then $u$ is c.b. and $\|u\|_{cb}=\|u\|   $.
\end{thm}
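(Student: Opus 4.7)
The direction $\|u\|\le\|u\|_{cb}$ is automatic; the task is the reverse. Fix $n\ge 1$, $x=(x_{ij})\in M_n(E)$ with $\|x\|\le 1$, and unit vectors $\eta,\zeta\in H^n$; it suffices to bound $|\langle u^{(n)}(x)\eta,\zeta\rangle|$ by $\|u\|$. My plan is to use cyclicity of $\pi_1$ and $\pi_2$ to reduce this matrix coefficient to a scalar pairing against $\xi_1,\xi_2$, and then to estimate the latter by $\|u\|\cdot\|x\|$.

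Given $\varepsilon>0$, density of $\pi_j(A_j)\xi_j$ in $H$ furnishes tuples $a=(a_i)\in A_1^n$ and $b=(b_j)\in A_2^n$ with $\pi_1(a_i)\xi_1$ approximating $\zeta_i$ and $\pi_2(b_j)\xi_2$ approximating $\eta_j$ within $\varepsilon$. Setting $\hat a=(\pi_1(a_i)\xi_1)_i$ and $\hat b=(\pi_2(b_j)\xi_2)_j$, the bimodularity relation $u(a_i^*x_{ij}b_j)=\pi_1(a_i^*)u(x_{ij})\pi_2(b_j)$ summed over $i,j$ yields the key identity
\begin{equation*}
\langle u^{(n)}(x)\hat b,\hat a\rangle=\langle u(y)\xi_2,\xi_1\rangle,\qquad y:=\sum_{i,j}a_i^*x_{ij}b_j\in E,
\end{equation*}
while the Hilbert-space norms $\|\hat a\|_{H^n}^2=\phi_1(\sum_i a_i^*a_i)$ and $\|\hat b\|_{H^n}^2=\phi_2(\sum_j b_j^*b_j)$, with $\phi_j(\cdot)=\langle\pi_j(\cdot)\xi_j,\xi_j\rangle$, approach $\|\zeta\|^2$ and $\|\eta\|^2$ as $\varepsilon\to 0$.

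The crux is to show $|\langle u(y)\xi_2,\xi_1\rangle|\le\|u\|\|x\|\|\hat a\|\|\hat b\|$. The factorization $y=a^*xb$ inside $B(\cl H)$ gives $u(y)=A\,u^{(n)}(x)\,B$, where the row $A:=(\pi_1(a_i^*))_i:H^n\to H$ satisfies $A^*\xi_1=\hat a$ and the column $B:=(\pi_2(b_j))_j:H\to H^n$ satisfies $B\xi_2=\hat b$; pairing at $\xi_2,\xi_1$ recovers the displayed identity. The naive estimate $|\langle u(y)\xi_2,\xi_1\rangle|\le\|u\|\|y\|\le\|u\|\|x\|\|a\|_{\mathrm{col}}\|b\|_{\mathrm{col}}$ is hopelessly weak, since the column $C^*$-norms can greatly exceed the Hilbert norms $\|\hat a\|,\|\hat b\|$. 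Following Smith, I would close the argument through a Haagerup / Christensen--Effros--Sinclair type factorization of $u$ itself, exploiting cyclicity twice: once to realize arbitrary matrix coefficients as scalar pairings (as above), and once through the fact that cyclicity of $\pi_j(A_j)$ on $H$ implies $\xi_j$ is \emph{separating} for the commutant $\pi_j(A_j)'$. This separating property permits, in effect, replacing each $C^*$-operator-norm appearance of $A$ or $B$ by its effective norm against $\xi_1,\xi_2$, namely $\|\hat a\|$ or $\|\hat b\|$. Letting $\varepsilon\to 0$ then gives $\|u^{(n)}(x)\|\le\|u\|\|x\|$.

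The main technical obstacle is precisely this ``$C^*$-to-Hilbert'' norm replacement: neither the trivial bound $\|u\|\|y\|$ nor the circular bound $\|u^{(n)}(x)\|\|\hat a\|\|\hat b\|$ suffices on its own. Breaking the apparent circularity requires Smith's structural argument, which is the reason cyclicity of \emph{both} $\pi_1$ and $\pi_2$ is hypothesized rather than just one side.
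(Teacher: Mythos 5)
The paper gives no proof of this statement; it is quoted from Smith \cite{Smod}. Judged on its own, your write-up carries out the standard reduction correctly: the identity $\langle u^{(n)}(x)\hat b,\hat a\rangle=\langle u(y)\xi_2,\xi_1\rangle$ with $y=\sum_{ij}a_i^*x_{ij}b_j$ is right, and you correctly diagnose that neither the bound $\|u\|\,\|y\|$ (operator norms of the columns are too big) nor $\|u^{(n)}(x)\|\,\|\hat a\|\,\|\hat b\|$ (circular) will close the argument. But the step you then defer to ``Smith's structural argument'' is the entire content of the theorem, and the two mechanisms you name would not supply it. The fact that $\xi_j$ is separating for the commutant $\pi_j(A_j)'$ gives no norm control over elements of $\pi_j(A_j)$ itself --- it is a statement about kernels in the commutant --- and a Christensen--Effros--Sinclair-type factorization of $u$ presupposes that $u$ is completely bounded, which is what is to be proved. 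So there is a genuine gap at the crux.

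The missing device is elementary and consists of one more application of bimodularity, not of any representation theorem. Put $\alpha_\delta=(\sum_i a_i^*a_i+\delta 1)^{1/2}\in A_1$ and $\beta_\delta=(\sum_j b_j^*b_j+\delta 1)^{1/2}\in A_2$; these are invertible, so $c_i:=a_i\alpha_\delta^{-1}\in A_1$ and $d_j:=b_j\beta_\delta^{-1}\in A_2$ satisfy $\sum c_i^*c_i\le 1$ and $\sum d_j^*d_j\le 1$. Then $y=\alpha_\delta\, y_0\,\beta_\delta$ with $y_0=\sum_{ij}c_i^*x_{ij}d_j\in E$ and $\|y_0\|\le\|x\|$ (a row--matrix--column estimate with contractive row and column), so bimodularity yields $u(y)=\pi_1(\alpha_\delta)\,u(y_0)\,\pi_2(\beta_\delta)$ and hence
$$|\langle u(y)\xi_2,\xi_1\rangle|\le\|u\|\,\|x\|\,\|\pi_1(\alpha_\delta)\xi_1\|\,\|\pi_2(\beta_\delta)\xi_2\|.$$
Since $\|\pi_1(\alpha_\delta)\xi_1\|^2=\|\hat a\|^2+\delta\|\xi_1\|^2$ and similarly on the other side, letting $\delta\to0$ and then the approximation error tend to $0$ gives $\|u^{(n)}(x)\|\le\|u\|\,\|x\|$. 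This is exactly the ``$C^*$-to-Hilbert'' replacement you were looking for: cyclicity is used once on each side only to guarantee that vectors of the form $\pi_1(a_i)\xi_1$, $\pi_2(b_j)\xi_2$ are dense, and the passage from operator norms to the Hilbert norms $\|\hat a\|,\|\hat b\|$ is achieved by absorbing $\alpha_\delta,\beta_\delta$ into $u$ via the module property and evaluating them against the cyclic vectors.
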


\section{Reduction to the $\sigma$-finite case}\label{rttsfc}

 We will show that
  if an inclusion of von Neumann algebras $M\subset \cl M$
  satisfies a certain property, say property $\mathscr{P}$,
  then there is a completely contractive projection $P: \cl M \to M$.
  The goal of this (technical)
  section is to show that modulo a simple
  assumption on the property $\mathscr{P}$ we may always restrict to the case when $M$ 
  is $\sigma$-finite.
  The proof will use the following structural theorem.
  
  \begin{thm}[Fundamental reduction to $\sigma$-finite case]\label{S}
 Any von Neumann algebra $M$ admits a decomposition as a direct sum 
$$M\simeq (\oplus\sum\nl_{i\in I} B({\cl H_i}) \bar\otimes  N_i )_\infty$$
where the $N_i$'s are   $\sigma$-finite (=countably decomposable)
and the ${\cl H_i}$'s are   Hilbert spaces.
\end{thm}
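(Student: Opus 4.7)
The plan is to decompose $M$ into central direct summands of constant ``multiplicity'' and then to identify each such piece with a tensor product $B(\cl H_i)\bar\otimes N_i$ via a system of matrix units arising from equivalences of $\sigma$-finite projections.

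First I would isolate the following basic lemma: every nonzero projection $p\in M$ contains a nonzero $\sigma$-finite subprojection. Fixing a faithful representation $M\subset B(H)$ and any unit vector $\xi\in pH$, the vector state $\omega_\xi$ is normal on $pMp$ and its support projection $s(\omega_\xi)$ is a nonzero subprojection of $p$; since any family of mutually orthogonal nonzero subprojections of $s(\omega_\xi)$ would carry strictly positive $\omega_\xi$-mass and hence must be countable, $s(\omega_\xi)$ is $\sigma$-finite.

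Next I would organise a Zorn argument producing a homogeneous decomposition. For a cardinal $\kappa$, call a central projection $z\in Z(M)$ \emph{$\kappa$-homogeneous} if $zM$ admits a family $\{e_\alpha\}_{\alpha\in I}$ of mutually orthogonal, mutually equivalent $\sigma$-finite projections with $|I|=\kappa$ and $\sum_\alpha e_\alpha=z$. Combining the lemma above with the comparison theorem for projections in von Neumann algebras (Schr\"oder--Bernstein, halving, tracking of central supports), I would show that every nonzero central projection dominates a nonzero $\kappa$-homogeneous one for some $\kappa$. A single application of Zorn to mutually orthogonal central projections $\{z_j\}$ each homogeneous of some multiplicity $\kappa_j$ then gives a central partition $1=\sum_i z_i$ in which $z_iM$ is $\kappa_i$-homogeneous for a well-defined cardinal $\kappa_i$; maximality, together with the basic lemma applied in $(1-\sum z_j)M$, forces $\sum_i z_i=1$.

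Finally, on each piece $z_iM$ I would fix the corresponding family $\{e_\alpha\}_{\alpha\in I_i}$ and partial isometries $v_\alpha\in z_iM$ with $v_\alpha^*v_\alpha=e_{\alpha_0}$ and $v_\alpha v_\alpha^*=e_\alpha$, for a chosen reference index $\alpha_0$. The elements $\{v_\alpha v_\beta^*\}_{\alpha,\beta\in I_i}$ form a system of matrix units summing to $z_i$, and the standard normal $*$-isomorphism argument identifies $z_iM\cong B(\ell_2(I_i))\bar\otimes N_i$ with $N_i:=e_{\alpha_0}Me_{\alpha_0}$; the latter is $\sigma$-finite because $e_{\alpha_0}$ is. Setting $\cl H_i:=\ell_2(I_i)$ and summing over $i$ yields the announced decomposition.

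The main obstacle is the comparison/Zorn step that produces homogeneous pieces with a well-defined multiplicity: one has to combine the $\sigma$-finite subprojection lemma with the center-valued comparison of projections in order both to guarantee the existence of a $\kappa$-homogeneous subprojection of any given central projection and to ensure that the cardinals $\kappa_i$ labelling distinct central summands are unambiguous. Once this bookkeeping is in place, the matrix-unit identification $z_iM\cong B(\cl H_i)\bar\otimes N_i$ is essentially routine.
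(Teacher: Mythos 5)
Your outline is correct and is essentially the classical argument: the paper gives no proof of Theorem \ref{S}, referring instead to Dixmier [Ch.\ III \S 1, Lemma 7], and the proof there proceeds exactly as you describe (nonzero $\sigma$-finite subprojections obtained as supports of normal vector states, a comparison-theorem/Zorn argument producing homogeneous central summands, and a system of matrix units identifying each summand $z_iM$ with $B(\cl H_i)\bar\otimes N_i$ where $N_i=e_{\alpha_0}Me_{\alpha_0}$ is $\sigma$-finite). One small simplification: for the existence statement you only need that every nonzero central projection dominates a nonzero $\kappa$-homogeneous one for \emph{some} $\kappa$, so the final worry about the cardinals $\kappa_i$ being unambiguous can be dropped.
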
 
 
See     \cite[Ch. III \S 1 Lemma 7] {Dix}  (p.224 in the French edition
and p. 291 in the English one) for a detailed proof.

 Note that if $M$ is $\sigma$-finite  we can take for $I$ a singleton
 with $N_i=M$ and ${\cl H_i}=\C$. 
 
  The assumptions we wish to make on $\mathscr{P}$ are as follows:
  if $M\subset \cl M$ has property $\mathscr{P}$
  then for any projection $q\in M$ the inclusion
$qMq\subset q{\cl M}q$ (unital with unit $q$)
also has  property $\mathscr{P}$.
Moreover, 
if $\pi : \cl M \to \cl M^1$
is an isomorphism of von Neumann algebras
taking $M$ onto a subalgebra $M^1\subset \cl M^1$,
then we assume that the ``isomorphic inclusion"
$M^1\subset \cl M^1$ also has  property $\mathscr{P}$.

\begin{pro}\label{ro4}
Under the preceding assumptions,
to show that   every inclusion $M\subset \cl M$ with  
property $\mathscr{P}$  admits a completely contractive projection $P: \cl M \to M$,
it suffices to settle the case when $M$ is $\sigma$-finite.
\end{pro}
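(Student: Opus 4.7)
My plan is a two-stage reduction based on the structural Theorem~\ref{S}, followed by a matricial lifting of the $\sigma$-finite projection back up to $\cl M$.

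By Theorem~\ref{S} there is a von Neumann isomorphism $M\simeq(\oplus\sum\nl_{i\in I}B(\cl H_i)\bar\otimes N_i)_\infty$ with every $N_i$ $\sigma$-finite; the isomorphism-stability of $\mathscr{P}$ allows us to transport the whole problem and to assume from the outset that $M$ equals this direct sum inside $\cl M$. Let $z_i\in M$ be the central projection onto the $i$-th summand. By the compression-stability of $\mathscr{P}$, each inclusion $z_iMz_i=B(\cl H_i)\bar\otimes N_i\subset z_i\cl M z_i$ inherits property $\mathscr{P}$; and given c.c.\ projections $P_i:z_i\cl M z_i\to z_iMz_i$, the formula $P(x):=\sum_i P_i(z_ixz_i)$ (weak*-convergent because the $z_i$ are pairwise orthogonal central projections of $M$ summing to $1$) defines a c.c.\ projection $\cl M\to M$: the map $x\mapsto(z_ixz_i)_i$ is c.c.\ into the $\ell^\infty$-direct sum of the $z_i\cl M z_i$'s, and a direct sum of c.c.\ maps is c.c. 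Hence it suffices to treat a single summand of the form $M=B(\cl H)\bar\otimes N$ with $N$ $\sigma$-finite.

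For such an $M$, pick a rank-one projection $e\in B(\cl H)$ and set $q:=e\otimes 1_N\in M$. Then $qMq\simeq N$ is $\sigma$-finite and, by compression-stability again, the inclusion $qMq\subset q\cl M q$ still has $\mathscr{P}$, so the $\sigma$-finite hypothesis furnishes a c.c.\ projection $\Phi:q\cl M q\to qMq$. To lift $\Phi$ to $\cl M$, fix an orthonormal basis $(\xi_\alpha)_{\alpha\in A}$ of $\cl H$ with $e$ the projection onto some $\xi_{\alpha_0}$, and partial isometries $v_\alpha\in B(\cl H)$ with $v_\alpha^*v_\alpha=e$ and $v_\alpha v_\alpha^*$ the rank-one projection onto $\xi_\alpha$; set $V_\alpha:=v_\alpha\otimes 1_N\in M$. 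For each finite $F\subset A$ the projection $Q_F:=\sum_{\alpha\in F}V_\alpha V_\alpha^*\in M$ identifies $Q_F\cl M Q_F$ with $M_F(q\cl M q)$ (via $x\mapsto(V_\alpha^*xV_\beta)_{\alpha,\beta\in F}$) and $Q_FMQ_F$ with $M_F(qMq)$, so the matrix amplification of $\Phi$ produces a c.c.\ projection $P_F:Q_F\cl M Q_F\to Q_FMQ_F\subset M$ given explicitly (after extending by zero to $\cl M$) by
\[
P_F(x)\ =\ \sum_{\alpha,\beta\in F}V_\alpha\,\Phi(V_\alpha^*xV_\beta)\,V_\beta^*,\qquad x\in\cl M.
\]
The net $(P_F)_F$, indexed by finite subsets of $A$ directed by inclusion, is uniformly completely contractive from $\cl M$ into $M$; along any ultrafilter refining this directed set one obtains a point-weak* limit $P:\cl M\to M$ that inherits complete contractivity, maps into $M$ (since each $P_F(x)\in M$ and $M$ is weak*-closed), and satisfies $P(y)=\lim_F Q_FyQ_F=y$ for every $y\in M$ because $Q_F\nearrow 1$ strongly. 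Hence $P$ is the desired c.c.\ projection.

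The main obstacle is the final ultrafilter passage: when $A$ is uncountable, the double sum $\sum_{\alpha,\beta}V_\alpha\Phi(V_\alpha^*xV_\beta)V_\beta^*$ need not converge in its own right (one would need normality of $\Phi$, which is not granted), so literal convergence must be replaced by a point-weak* cluster point of the $P_F$. The uniform complete contractivity of $(P_F)$, coming from the amplification of $\Phi$ over $M_F$, together with weak*-compactness of the unit ball of $\cl M$, makes this cluster point well-behaved. The two stability hypotheses on $\mathscr{P}$ are used precisely to carry $\mathscr{P}$ through the isomorphism in the first stage and through the compressions by the $z_i$ and by $q$ in the two reduction steps.
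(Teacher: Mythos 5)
Your proof is correct, and after the common first reduction (the direct sum decomposition of Theorem~\ref{S} and the passage to a single summand via the central projections) it takes a genuinely different route from the paper's in the key lifting step. The paper factorizes the \emph{ambient} algebra: since $B(\cl H_i)\otimes 1\subset \cl M_i$, the structure of normal representations of $B(\cl H_i)$ yields an isomorphism $\pi_i:\cl M_i\to B(\cl H_i)\bar\otimes \cl N_i$ carrying $M_i$ onto $B(\cl H_i)\bar\otimes N_i^1$; the $\sigma$-finite case is then applied to $N_i^1\subset\cl N_i$ and the projection is lifted as $Id_{B(\cl H_i)}\otimes P_i$. You never factorize $\cl M_i$: you compress to the corner $q\cl M q$ with $q=e\otimes 1\in M$, apply the $\sigma$-finite case there, and rebuild a projection on all of $\cl M$ by matrix-unit amplification over the finite corners $Q_F$ followed by a point-weak* cluster point. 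What your route buys is independence from the representation-theoretic factorization of $\cl M_i$ --- all the auxiliary elements ($z_i$, $q$, the $V_\alpha$) live in $M$, so essentially only the compression-stability of $\mathscr{P}$ is used at this stage --- at the cost of carrying out by hand the convergence that the paper absorbs into the assertion that $Id_{B(\cl H_i)}\otimes P_i$ is well defined and completely contractive on the von Neumann tensor product (a fact whose standard proof is precisely your finite-corner-plus-weak*-limit argument, needed here because $\Phi$ is not assumed normal, as you correctly flag). One small presentational caveat: the isomorphism of Theorem~\ref{S} is an isomorphism of $M$ alone, so you cannot literally ``assume $M$ equals this direct sum inside $\cl M$''; this is harmless for your argument, since everything you use is transported through that isomorphism as elements of $M$, but the phrasing should be adjusted so as not to suggest an isomorphism of the pair $(M,\cl M)$.
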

\begin{proof} Consider a general inclusion $M\subset \cl M$.
By the structural Theorem \ref{S} we may assume
 \begin{equation}\label{be2'}  M = (\oplus\sum\nl_{i\in I}
 B({\cl H_i})  \bar\otimes N_i )_\infty,\end{equation}
 with the $N_i$'s $\sigma$-finite.
 Let $M_i=B({\cl H_i})  \bar\otimes N_i $.
Let $q_i$ be the (central) projection in $M$
 corresponding to  $M_i $ in \eqref{be2'} so that
 $M_i =q_i M=q_i Mq_i$.
 Let $\cl M_i= q_i \cl M q_i$.
By our first assumption on $\mathscr{P}$  the inclusion $ M_i\subset \cl M_i$ satisfies $\mathscr{P}$.
We claim that  we have 
a von Neumann  algebra $\cl N_i 
$, with a subalgebra $  N^1_i \subset \cl N_i$ 
and
an isomorphism $\pi_i: \cl M_i \to B({\cl H_i}) \bar\otimes \cl N_i  $
such that $\pi_i (M_i)= B({\cl H_i}) \bar\otimes   N^1_i  $.
In other words,
the inclusion $ M_i\subset \cl M_i$ is ``isomorphic" in the preceding sense
to the inclusion $B({\cl H_i})  \bar\otimes N^1_i \subset B({\cl H_i})  \bar\otimes \cl N_i$.
Indeed, since $B({\cl H_i})\simeq B({\cl H_i}) \otimes 1 \subset \cl M_i $,   by
  a well known property of 
  the representations of the $B(H)$'s,  for some $\cl N_i$ we have an isomorphism $\pi_i: \cl M_i \to B({\cl H_i}) \bar\otimes \cl N_i  $
  so that 
    $\pi_i(x\otimes 1) = x \otimes 1$  for any $x\in B({\cl H_i})$.
  Then the subalgebra   $1 \otimes N_i \subset M_i \subset \cl M_i$ is mapped by $\pi_i: \cl M_i \to B({\cl H_i}) \bar\otimes \cl N_i  $ 
   to a subalgebra that commutes with 
   $B({\cl H_i}) \otimes 1_{\cl N_i}$, and hence is included in $1 \otimes\cl N_i$.
   Thus we find $N_i^1$ such that $\pi_i( 1 \otimes N_i  )= 1 \otimes N_i^1$, and an isomorphism
   $\psi_i: N_i \to N_i^1$  
   such that $  \pi_i(1\otimes y)= 1 \otimes \psi_i(y)$
   for all $  y\in N_i$.
   It follows that $\pi_i( B({\cl H_i}) \otimes N_i  )= B({\cl H_i}) \otimes N_i^1$,
   and since $\pi_i$ is bicontinuous
   for the weak* topology, we have
   $\pi_i( B({\cl H_i}) \bar\otimes N_i  )= B({\cl H_i}) \bar\otimes N_i^1$.
   This proves the claim.\\
   By our second assumption on $\mathscr{P}$, the inclusion
     $B({  {\cl H_i}})  \bar\otimes N^1_i \subset B({  {\cl H_i}})  \bar\otimes \cl N_i$ 
     satisfies $\mathscr{P}$.
     Let  $r_i$ be a rank 1 projection in $B({ {\cl H_i}})$.
     Let $q_i'= r_i \otimes 1$.
     By our first assumption again, the inclusion
     $q_i'[ B({  {\cl H_i}})  \bar\otimes N^1_i ]q_i' \subset q_i'[ B({  {\cl H_i}})  \bar\otimes \cl N_i ]q_i' $
     (with unit $q_i'$) also     satisfies $\mathscr{P}$.
     The latter being clearly ``isomorphic"
   to the inclusion $N^1_i \subset \cl N_i$
   we conclude that $N^1_i \subset \cl N_i$ satisfies $\mathscr{P}$.
   But now, at last,  since $N^1_i\simeq N_i$ is $\sigma$-finite,
   if we accept the $\sigma$-finite case, we find
   that there is a completely contractive projection $P_i: \cl N_i \to N^1_i$. 
   Therefore, 
   $Id_{B({\cl H_i}) } \otimes P_i$
   defines  a completely contractive projection
   from
   $B({\cl H_i}) \bar\otimes \cl N_i$
   to $B({\cl H_i}) \bar\otimes   N^1_i$. Then 
   $Q_i=\pi_i^{-1}[Id_{B({\cl H_i}) } \otimes P_i] \pi_i $
   is a completely contractive projection
   from
    $\cl M_i$ to $M_i$,
   and hence 
   the mapping $x \mapsto   (Q_i(q_i x q_i) )_{i\in I} \in (\oplus \sum\nl_{i\in I} M_i)_\infty$
   gives us
   a completely contractive projection
   from $\cl M$ onto $M$.
\end{proof}

\section{A  new characterization of generalized  weak expectations and the WEP}\label{ctwwdm}
\index{generalized  weak expectation}

The main result is the following characterization of generalized  weak expectations
(see Definition \ref{dgwep}),
in terms of decomposable maps.
 
\begin{thm}\label{ro5} Let $B$ be a $C^*$-algebra.
Let $i: A\to B$ be the inclusion mapping from
a $C^*$-subalgebra $ A\subset B$.
The following 
 are equivalent:
\begin{itemize}
\item[{\rm (i)}] For any $n\ge 1$ and any $T:\ell_\infty^n \to A$
we have $$\|  T\|_{D(\ell_\infty^n ,A)}=\|  iT\|_{D(\ell_\infty^n ,B)}.$$
\item[{\rm (i)'}] For any $n\ge 1$ and any $t\in E_n \otimes  A$
we have $$\|  t\|_{ \mathscr{C}\otimes_{\max} A}=\|  t\|_{ \mathscr{C}\otimes_{\max} B}.$$
\item[{\rm (ii)}] For any $n\ge 1$ and any $v:\ell_\infty^n \to A^{**}$
we have $$\|  v\|_{D(\ell_\infty^n ,A^{**})}=\|  i^{**} v\|_{D(\ell_\infty^n ,B^{**})}.$$
\item[{\rm (iii)}] There is a completely contractive c.p. projection
$P: B^{**} \to A^{**}$ (in other words by Remark \ref{x11}  the inclusion $i: A \to B$ admits a generalized weak
expectation).
\item[{\rm (iv)}] For any   $C^*$-algebra $C$ and any $t\in  {C} \otimes  A$
we have $\|  t\|_{  {C}\otimes_{\max} A}=\|  t\|_{  {C}\otimes_{\max} B}.$
\item[{\rm (v)}] For any   $t\in \mathscr{C} \otimes  A$
we have $\|  t\|_{ \mathscr{C}\otimes_{\max} A}=\|  t\|_{ \mathscr{C}\otimes_{\max} B}.$
\end{itemize}
\end{thm}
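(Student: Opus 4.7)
The plan is to close the cycle (i) $\Leftrightarrow$ (i)' $\Leftrightarrow$ (ii) $\Rightarrow$ (iii) $\Rightarrow$ (iv) $\Rightarrow$ (v) $\Rightarrow$ (i)', where the substantive content lies in (iii) $\Rightarrow$ (iv) and the key implication (ii) $\Rightarrow$ (iii). For (i) $\Leftrightarrow$ (i)', Lemma \ref{ro27} gives, for $T : \ell_\infty^n \to A$ with $T(e_{j+1}) = x_j$ and associated tensor $t = \sum_{0 \le j < n} U_j \otimes x_j \in E_n \otimes A$, that $\|T\|_{D(\ell_\infty^n, A)} = \|t\|_{\mathscr{C}\otimes_{\max} A}$, and the same with $B$ in place of $A$; this identifies (i) with (i)'. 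For (i) $\Leftrightarrow$ (ii), Theorem \ref{da50} isometrically identifies $D(\ell_\infty^n, A^{**})$ with $D(\ell_\infty^n, A)^{**}$, so the inclusion-induced map $D(\ell_\infty^n, A^{**}) \to D(\ell_\infty^n, B^{**})$ is simply the bidual of $D(\ell_\infty^n, A) \to D(\ell_\infty^n, B)$, and a linear map is isometric iff its bidual is. The trivial chain (iv) $\Rightarrow$ (v) $\Rightarrow$ (i)' is specialization to $C = \mathscr{C}$ and restriction to $E_n \otimes A$.

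For (iii) $\Rightarrow$ (iv), let $P : B^{**} \to A^{**}$ be a c.p.\ contractive projection and $V = P|_B : B \to A^{**}$ the associated generalized weak expectation (Remark \ref{x11}), a c.p.\ contraction with $V|_A = id_A$, so $\|V\|_{dec} = 1$ by Lemma \ref{d60}(i). Proposition \ref{prodec1} then yields the contraction $Id_C \otimes V : C \otimes_{\max} B \to C \otimes_{\max} A^{**}$, which fixes elements of $C \otimes A$, giving
$$\|t\|_{C \otimes_{\max} A^{**}} \le \|t\|_{C \otimes_{\max} B} \le \|t\|_{C \otimes_{\max} A},$$
the second inequality being trivial from the inclusion $A \hookrightarrow B$. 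But $C \otimes_{\max} A \hookrightarrow C \otimes_{\max} A^{**}$ is isometric: any commuting pair of representations $(\pi_C, \pi_A)$ on a single Hilbert space extends to a commuting pair $(\pi_C, \tilde\pi_A)$ with $\tilde\pi_A : A^{**} \to \pi_A(A)''$ the normal extension, which automatically commutes with $\pi_C(C) \subset \pi_A(A)'$. Hence the three norms coincide, proving (iv).

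The crux is (ii) $\Rightarrow$ (iii): build a c.p.\ contractive projection $P : B^{**} \to A^{**}$ from dec-norm equalities on $\ell_\infty^n$-valued maps into $A^{**}$. My proposed strategy is first to reduce to the $\sigma$-finite case for $A^{**}$ via Proposition \ref{ro4}, which applies because the hypothesis is manifestly stable under $*$-isomorphism and under passage to corners $qMq \subset q\cl Mq$ (a decomposable map into $qMq$ includes into $M$, and the dec-norm factorization formula \eqref{da8'} tracks through the corner). In the $\sigma$-finite case, fix a faithful normal state on $A^{**}$; for each finite-dimensional operator system $F \subset B^{**}$, the hypothesis (ii), translated via Lemma \ref{ro27}, should furnish an approximate c.p.\ contraction $\varphi_F : F \to A^{**}$ fixing $F \cap A^{**}$. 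A point-weak$^*$ cluster point of the net $(\varphi_F)$ then provides the desired projection $P$.

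The main obstacle is precisely the construction of $\varphi_F$: the hypothesis delivers only commutative test data (maps out of $\ell_\infty^n$), whereas the conclusion demands a fully non-commutative c.p.\ extension to an arbitrary finite-dimensional operator subsystem of $B^{**}$. Bridging this gap is the essence of Haagerup's unpublished argument; one expects the proof to proceed through a combination of the multiplicative-domain machinery of Theorem \ref{md} (to recognize when an approximate extension behaves as an $A^{**}$-bimodule map on a suitable domain) with the cyclic bimodular extension principle of Theorem \ref{smith1} (to promote bounded bimodule maps to completely bounded ones), all powered by the dec-norm techniques of \cite{Ha} that produce the required Hahn-Banach-type extensions from the max-tensor isometric embedding implicit in (ii).
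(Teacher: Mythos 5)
Your treatment of the easy implications --- (i) $\Leftrightarrow$ (i)' via \eqref{impu}, (i) $\Leftrightarrow$ (ii) via Theorem \ref{da50} and the bitranspose criterion for isometry, (iii) $\Rightarrow$ (iv) via the dec-norm of the generalized weak expectation together with the isometric embedding $C\otimes_{\max} A\subset C\otimes_{\max}A^{**}$, and the trivial chain (iv) $\Rightarrow$ (v) $\Rightarrow$ (i)' --- matches the paper. But the proof is not complete: the implication (ii) $\Rightarrow$ (iii), which is the entire substance of the theorem, is left as an acknowledged gap, and the strategy you sketch for it (approximate c.p.\ contractions $\varphi_F$ on finite-dimensional operator systems $F\subset B^{**}$, followed by a point-weak* cluster point) is not the route the paper takes and does not by itself resolve the difficulty you correctly identify, namely that the hypothesis only controls maps out of the commutative algebras $\ell_\infty^n$.

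The paper's actual argument (Theorem \ref{ro5'}, applied to the von Neumann inclusion $A^{**}\subset B^{**}$) avoids local finite-dimensional subsystems of $B^{**}$ entirely. After the reduction to the $\sigma$-finite case via Proposition \ref{ro4}, one realizes $M=A^{**}$ on a Hilbert space $H$ with a cyclic vector, chooses a family $(x)_{x\in I}$ of unitaries generating the commutant $M'$, and encodes them by the free unitaries $U_x\in C^*(\F)$ via the $*$-homomorphism $\sigma(U_x)=x$. The single global map $\hat T: E\otimes M\to B(H)$, $e\otimes m\mapsto \sigma(e)m$ (where $E={\rm span}[U_x\mid x\in \dot I]$), is contractive for the norm induced by $C^*(\F)\otimes_{\max}\cl M$ precisely because of (i)': one has $\|\hat T(t)\|\le\|t\|_{C^*(\F)\otimes_{\max}M}$ since $\sigma$ and the inclusion of $M$ have commuting ranges, and (i)' identifies this max-norm with the one induced from $\cl M$. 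This is the step where the commutative test data is converted into operator-algebraic information. Cyclicity and Smith's Theorem \ref{smith1} then upgrade $\hat T$ to a complete contraction, Arveson's extension theorem produces a u.c.p.\ map $\tilde T$ on $C^*(\F)\otimes_{\max}\cl M$, and the multiplicative domain theorem (Theorem \ref{md}) applied to the unitaries $U_x\otimes 1$ forces $P(b)=\tilde T(1\otimes b)$ to commute with $M'$, hence to land in $(M')'=M$. It is exactly this chain --- cyclic representation, commutant generators, free-group encoding, Smith, Arveson, multiplicative domain --- that bridges the commutative-to-noncommutative gap; without it your proposal establishes only the peripheral equivalences.
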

\begin{rem} Curiously, there does not seem to be a direct argument to show (i)' $\Rightarrow$ (iv).
\end{rem}
\begin{rem} 
The equivalences (iii) $\Leftrightarrow $ (iv)   $\Leftrightarrow $ (v)  are due  to Kirchberg \cite{Kir}.
\end{rem}
\begin{proof}[Proof of Theorem \ref{ro5}]
Note that (i) $\Leftrightarrow $ (i)' is immediate by \eqref{impu}.
 We now claim (i) $\Leftrightarrow $ (ii).
This is an immediate consequence
of Theorem \ref{da50}.
Indeed, let $X_n={D(\ell_\infty^n ,A)}$ and $Y_n={D(\ell_\infty^n ,B)}$, viewed as Banach spaces.
Then,   the assertion that $X_n \subset Y_n$ isometrically, which is a reformulation of (i), 
is equivalent to $X^{**}_n \subset Y^{**}_n$ isometrically.  This follows
 from the  classical fact   that a mapping between Banach spaces is isometric if and only if its
 bitranspose is isometric. By Theorem \ref{da50}
we have $X^{**}_n=  D(\ell_\infty^n ,A^{**}) $  and $Y^{**}_n=  D(\ell_\infty^n ,B^{**}) $.
Thus, (i) $\Leftrightarrow $ (ii) follows.
Let us show (iii) $\Rightarrow $ (iv). Assume (iii). Then by \eqref{x8} and \eqref{x9},  for any $C^*$-algebra $C$
we have $$\| Id_C \otimes P: C \otimes_{\max}  B^{**} \to C \otimes_{\max}    A^{**}\|=1,$$
which clearly implies that the natural map $C \otimes_{\max}    A^{**} \to C \otimes_{\max} B^{**}$
is isometric.  Since as is well known (and elementary to check)
the natural morphisms $C \otimes_{\max}    A \subset C \otimes_{\max}    A^{**}$
and $C \otimes_{\max}    B \subset C \otimes_{\max}    B^{**}$
are  isometric, the natural morphism $C \otimes_{\max}    A  \to C \otimes_{\max} B $ also is isometric. This proves (iv) and (iv) $\Rightarrow $ (v) is trivial.
%In particular this holds when $C=\mathscr C$ which means that (iv) holds.\\
\\ In the converse direction,  (v) $\Rightarrow $ (i)' also is trivial.\\
Thus, to complete the proof, it suffices to prove the remaining equivalence (ii) $\Leftrightarrow $ (iii), which will follow from the next statement about von Neumann algebras applied to the inclusion $A^{**} \subset B^{**}$.
\end{proof}

\begin{thm}\label{ro5'} Let $\cl M$ be a von Neumann algebra.
Let $i: M\to \cl M$ be the inclusion mapping from
a  von Neumann subalgebra $ M\subset \cl M$.
The following 
 are equivalent:
\begin{itemize}
\item[{\rm (i)}] For any $n\ge 1$ and any $T:\ell_\infty^n \to M$
we have $$\|  T\|_{D(\ell_\infty^n ,M)}=\|  iT\|_{D(\ell_\infty^n ,\cl M)}.$$
\item[{\rm (i)'}] For any $n\ge 1$ and any $t\in E_n \otimes M$
we have $$\|  t\|_{ \mathscr{C} \otimes_{\max} M}=\| t \|_{\mathscr{C}\otimes_{\max} \cl M}.$$
\item[{\rm (ii)}] There is a completely contractive c.p. projection
$P: \cl M \to M$.
\item[{\rm (iii)}]  For any $C^*$-algebra $C$  the natural map $C \otimes_{\max}    M \to C \otimes_{\max} \cl M$
is isometric.
\end{itemize}
\end{thm}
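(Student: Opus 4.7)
The easy implications can be settled directly. The equivalence (i) $\Leftrightarrow$ (i)' is immediate from Lemma \ref{ro27}, which identifies $\|T\|_{dec}$ with $\|\sum U_{j-1}\otimes T(e_j)\|_{\mathscr{C}\otimes_{\max}\cdot}$. For (ii) $\Rightarrow$ (iii), a c.c.p. projection $P:\cl M\to M$ has $\|P\|_{dec}=1$ by \eqref{x9}, so \eqref{x8} makes $Id_C\otimes P:C\otimes_{\max}\cl M\to C\otimes_{\max}M$ a contraction; since its composition with the natural embedding $C\otimes_{\max}M\to C\otimes_{\max}\cl M$ is the identity on $C\otimes M$, that embedding is forced to be isometric. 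The implication (iii) $\Rightarrow$ (i)' is the specialization $C=\mathscr{C}$ restricted to $E_n\otimes M$.

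For the substantive implication (i) $\Rightarrow$ (ii), the plan is to apply Proposition \ref{ro4} to reduce to the $\sigma$-finite case, which requires verifying stability of hypothesis (i) under (a) compression by a projection $q\in M$ and (b) isomorphic inclusions. Stability under (b) is immediate since the dec-norm is $C^*$-invariant. For (a), the compression map $\sigma_q:x\mapsto qxq$ is c.c.p., so $\|\sigma_q\|_{dec}=1$ by \eqref{x9}; combined with the composition rule \eqref{d45} (and the trivial reverse inequality) one obtains $\|T\|_{D(\ell_\infty^n,qMq)}=\|T\|_{D(\ell_\infty^n,M)}$ for any $T$ with range in $qMq$, and similarly with $\cl M$ in place of $M$. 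Hypothesis (i) then transfers to the compressed inclusion.

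In the $\sigma$-finite case the plan is to construct the c.c.p. projection $P:\cl M\to M$ by exploiting a cyclic vector. Fix a faithful normal state $\varphi$ on $M$, Hahn--Banach-extend it to a state $\tilde\varphi$ on $\cl M$, and form the GNS triple $(K,\pi_{\tilde\varphi},\xi)$; inside $K$, the closed subspace $H=\overline{\pi_{\tilde\varphi}(M)\xi}$ carries a standard form of $M$ with $\xi$ cyclic and separating, so $M$ may be identified with $\pi_{\tilde\varphi}(M)\subset B(H)$. Starting from the naive compression $Q(x)=e\pi_{\tilde\varphi}(x)e$ (with $e$ the projection $K\to H$), the plan is to use hypothesis (i) --- or rather its equivalent bidual form from Theorem \ref{ro5} via the local reflexivity Theorem \ref{da50} --- to refine $Q$ so that its range lies in $M$ rather than in $B(H)$. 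The factorization bound supplied by \eqref{da8'} applied in both $\cl M$ and $M$ is what allows matrix coefficients of $Q(x)$ to be rewritten through $M$; a Banach--Alaoglu compactness argument on the unit ball of $CP(\cl M,M)$ together with the multiplicative domain theorem \ref{md} (to enforce $P|_M=id_M$) then extracts the projection.

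The main obstacle is precisely this last step: translating the purely commutative, finite-dimensional hypothesis (i) --- which controls only maps from $\ell_\infty^n$ --- into the existence of a noncommutative c.p. projection defined on all of $\cl M$. The $\sigma$-finite standard form provides the right arena, the cyclic-module technology behind Theorem \ref{smith1} gives the correct notion of multiplicativity in the limit, and the factorization identity \eqref{da8'} is the quantitative tool, but combining these into an honest c.c.p. projection is delicate and constitutes the technical heart of Haagerup's unpublished argument reconstructed here.
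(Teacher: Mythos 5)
Your treatment of the easy implications is correct, and the reduction of (i) $\Rightarrow$ (ii) to the $\sigma$-finite case via Proposition \ref{ro4} (checking stability of the hypothesis under compressions and isomorphisms) matches the paper. But the heart of the theorem is the construction of the c.c.p.\ projection $P:\cl M\to M$ in the $\sigma$-finite case, and there your proposal does not actually contain an argument: you list ingredients (standard form, \eqref{da8'}, a compactness argument on $CP(\cl M,M)$, Theorem \ref{md}) and then state that combining them ``is delicate and constitutes the technical heart'' of the proof. That is precisely the step that cannot be deferred. Moreover, the direction you sketch --- compress via $Q(x)=e\pi_{\tilde\varphi}(x)e$ and then ``refine'' $Q$ so that its range lies in $M$ --- has no visible mechanism for forcing the range into $M$: the hypothesis (i) controls only maps \emph{out of} $\ell_\infty^n$ \emph{into} $M$ versus $\cl M$, and nothing in the GNS compression picture converts that into a constraint on where a u.c.p.\ map on $\cl M$ lands. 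The appeal to Theorem \ref{da50} and biduals is also superfluous here, since $M$ and $\cl M$ are already von Neumann algebras.

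The missing idea is the commutant trick. After reducing to the case where $M\subset B(H)$ has a cyclic vector, one takes a family $I$ of unitaries generating the commutant $M'$, a free group $\F$ with generators indexed by $I$, and the $*$-homomorphism $\sigma:C^*(\F)\to M'$ sending $U_x$ to $x$. The map $\hat T(e\otimes m)=\sigma(e)m$ is contractive on $E\otimes M$ for the $C^*(\F)\otimes_{\max}M$ norm (because $\sigma(e)$ and $m$ commute), and hypothesis (i)$'$ --- via \eqref{impu} --- is used exactly to upgrade this to contractivity for the norm induced by $C^*(\F)\otimes_{\max}\cl M$. Smith's Theorem \ref{smith1} (cyclicity) gives $\|\hat T\|_{cb}=\|\hat T\|$, Arveson extension produces a u.c.p.\ map $\tilde T$ on $C^*(\F)\otimes_{\max}\cl M$, and then the multiplicative domain theorem \ref{md} applied to the unitaries $\tilde T(U_x\otimes 1)=x$ forces $P(b)=\tilde T(1\otimes b)$ to commute with every $x\in I$, hence $P(b)\in (M')'=M$. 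It is this interplay between the free group, the commutant, and the multiplicative domain that turns the commutative finite-dimensional hypothesis into a projection onto $M$; without it your outline does not close.
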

\begin{proof}  
We already saw that (i) and (i)' are equivalent by \eqref{impu}.
We first show (i)' $\Rightarrow $ (ii).
We will use the reduction to the $\sigma$-finite case.
Let $\mathscr{P}$ be the property appearing in (i). 
By the results of \S \ref{dec}
it is easy to check that $\mathscr{P}$ satisfies
the assumptions of Theorem \ref{ro4}.
Therefore, to show  (i) $\Rightarrow $ (ii)
we may assume  $M$  $\sigma$-finite.
Then   there is a realization of $M$ in some $B(H)$
such that $M$ has a cyclic vector. Let $M'\subset B(H)$ be the commutant of
$M$ in $B(H)$.
Let $I\subset U(M')\setminus \{1\}$ 
be a set of unitaries in $M'$ that jointly generate $M'$ as a von Neumann algebra, 
and let $\dot I=I\cup \{1\} $.
Let $\F $ be a free group with (free) generators $(g_x)_{x\in I}$.
Let $U_x=U_{\F}(g_x)\in C^*(\F)$ ($x\in I$), set also $U_1 =1_{C^*(\F)}$, and  
let $\sigma: {C^*(\F)} \to M'$ be the unital $*$-homomorphism
defined by $\sigma(U_x)=x$ for all $x\in I$.
  Let     $E= {\rm span}[ U_x \mid x\in {\dot I}] $.
Consider then the linear mapping
$\hat T: E \otimes M  \to B(H) $ defined for any $e\in E,m\in M$ by
$ \hat T( e \otimes m)=\sigma(e) m$ 
 (and extended by linearity to $E \otimes M $).
 Then for any $t\in E \otimes M$
 we have clearly  
 $$\|\hat T (t) \|  \le \|t\|_{C^*(\F) \otimes_{{\max}}   M}.$$ 
Thus (i)' implies
$$\|\hat T : E\otimes_{\ovl{\max}} M \to B(H)\|\le 1$$
where $E\otimes_{\ovl{\max}} M $ is viewed
as a subspace of $C^*(\F) \otimes_{ {\max}} \cl M $ equipped with the induced norm.
\\
By Theorem \ref{smith1} since $M$ has a cyclic vector we have 
$$\|\hat T: E\otimes_{\ovl{\max}} M \to B(H)\|=\|\hat T: E\otimes_{\ovl{\max}} M \to B(H)\|_{cb} .$$
By Arveson's  extension theorem  
there is  $\tilde T: {C^*(\F) \otimes_{{\max}}   \cl M} \to B(H)$
extending $\hat T$ with $\|\tilde T\|_{cb}\le 1$.
$$\xymatrix{&{C^*(\F)\otimes_{{\max}} {\cl M}} \ar@{-->}[dr]^{ \tilde T } \\
& E\otimes_{\ovl{\max}} M \ar@{^{(}->}[u] 
 \ar[r]^{ \hat T \quad } & B(H)}$$
Since $\hat T$ is unital so is $\tilde T$ and hence $\tilde T$ is c.p.
We claim that $E \otimes 1$ (and hence actually $C^*(\F) \otimes 1$) is included in the 
\index{multiplicative domain}
multiplicative domain $D_{\tilde T}$. Indeed,
since $\tilde T (U_x \otimes 1)= \hat T (U_x \otimes 1)=x \in U(M')$ for any $x\in \dot I$, 
we have $U_x \otimes 1 \in D_{\tilde T}$ for any $x\in \dot I$ and 
the claim follows. Let $P: \cl M\to B(H)$ be  defined by
$P(b)= \tilde T ( 1 \otimes b)$.
Then $P$ is  completely contractive and  c.p.   Since 
$U_x \otimes 1 \in D_{\tilde T}$ for any $x\in \dot I$
and since,   
by Theorem \ref{md},
$ \tilde T$ is bimodular with respect to $ D_{\tilde T}$ we have
by a well known argument  (called ``The trick" in \cite{BO} !)  for any $x\subset  \dot I=U(M')$
$$xP(b)=x\tilde T ( 1 \otimes b) =  \tilde T( (U_x \otimes 1) ( 1 \otimes b))=
\tilde T ( U_x \otimes b) =
\tilde T( ( 1 \otimes b) (U_x \otimes 1) )=\tilde T ( 1 \otimes b) x =P(b)x.$$
Since the unitaries in $I$ generate $M'$,
this shows that $P(b)\in (M')'=M$ and completes
the proof that (i)' $\Rightarrow $ (ii).\\
Assume (ii). Then, by \eqref{x8} and \eqref{x9},  for any $C^*$-algebra $C$
we have $$\| Id_C \otimes P: C \otimes_{\max}  \cl M \to C \otimes_{\max}    M\|=1,$$
which clearly implies that the natural map $C \otimes_{\max}    M \to C \otimes_{\max} \cl M$
is isometric. Thus  (ii) $\Rightarrow $ (iii).\\
Lastly, taking $C =\mathscr C$, (iii) $\Rightarrow $ (i)' is trivial. 
\end{proof}

\begin{rem}[The case $n=3$]\label{ro18} 
In the situation of the preceding Theorem \ref{ro5'}
let us merely assume that for
any $T:\ell_\infty^3 \to M$
we have $\|  T\|_{D(\ell_\infty^3 ,M)}=\|  iT\|_{D(\ell_\infty^3 ,\cl M)}.$
If we assume in addition that $M\subset B(H)$ is cyclic and that $M'$
 is generated by a pair of unitaries, then the same proof (now with $\F=\F_2$ and  $|\dot I |=3$) shows
that there is a completely contractive c.p. projection
$P: \cl M \to M$. Thus when $\cl M=B(H)$ we conclude that $M$ is injective.\\
We recall in passing that it is a
longstanding open problem whether any von Neumann algebra
on a separable Hilbert space is generated by a single element or equivalently by
two unitaries. For example,  this single generation problem is open
 for $M_{\F_\infty}$.
Important partial results are known, notably by Carl Pearcy,  see \cite{DoPe} for details
and references.
See Sherman's paper \cite{Sher}  
for the current status of that problem.

 \end{rem}

We now return to the characterization of
the WEP.
 \begin{proof}[Proof of Theorem \ref{x5}]
We apply Theorem \ref{ro5} with $B=B(H)$.
Note that in that case, 
by \eqref{d46} and \eqref{eq81},
we have
$$\|T\|_{D(\ell_\infty^n ,A)}=\|t\|_{\mathscr{C} \otimes_{\max} A}
\text{  and  }\|  iT\|_{D(\ell_\infty^n ,B(H))}=\| T\|_{cb}=\|t\|_{\mathscr{C} \otimes_{\min} A}$$
for any $T: \ell_\infty^n \to A$,
so that (i) in Theorem \ref{x5}
is the same as (i) in Theorem \ref{ro5}.
But when $B=B(H)$, the conditions (namely either (iii) or (v)) in Theorem \ref{ro5} are equivalent to the WEP for $A$.
Indeed,   by (ii)
  in Theorem \ref{kk1}, (iii) in Theorem \ref{ro5} implies the WEP when $B=B(H)$. Alternatively,
  by Kirchberg's Theorem \ref{kir} (or its corollary) we have 
  $\|t\|_{\mathscr{C} \otimes_{\max} B(H)}=\|t\|_{\mathscr{C} \otimes_{\min} B(H)}$ and hence by the injectivity of the min-tensor product
  $$\|t\|_{\mathscr{C} \otimes_{\max} B(H)}=\|t\|_{\mathscr{C} \otimes_{\min} A}.$$
  Thus (v) in Theorem \ref{ro5} implies that
 $ \|t\|_{\mathscr{C} \otimes_{\min} A}=\|t\|_{\mathscr{C} \otimes_{\max} A}$
 for any $t\in {\mathscr{C} \otimes  A}$, which exactly means $A$ has   the WEP.
  The converse implication (ii) $\Rightarrow$ (i) in Theorem \ref{x5} is trivial.
\end{proof}

\begin{proof}[Proof of Corollary \ref{x1}]
We again invoke Theorem \ref{ro5} with $B=B(H)$.\\
By \eqref{impu} we have $\|  T\|_{D(\ell_\infty^n ,A)}=\|t\|_{\mathscr{C} \otimes_{\max} A}$
and by 
\eqref{eq81} and \eqref{d46}
$\|iT\|_{cb}=\|i T\|_{D(\ell_\infty^n ,B(H))}=\|t\|_{\min}$.
Therefore, either (i) or (i)' in Corollary \ref{x1} 
is equivalent to (i) (and hence to (iii) or (v)) in Theorem \ref{ro5}.
Thus, as in the preceding proof, this is equivalent to  the WEP for $A$.\end{proof}

%\begin{rem} 
%???
%\end{rem}

Since WEP and injectivity are equivalent 
for von Neumann algebras, we can now
recover Haagerup's original result (see \cite{Ha}):

\begin{cor}\label{ro10} When $A$ is a von Neumann algebra,
the assertion (i) in Corollary \ref{x1}
holds if and only if $A$ is injective.
\end{cor}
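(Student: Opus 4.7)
The plan is to reduce the statement to a direct comparison with the classical characterization of injective von Neumann algebras, using the work already done in Corollary \ref{x1}.

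First, by Corollary \ref{x1}, assertion (i) is equivalent to $A$ having the WEP (this holds for any $C^*$-subalgebra $A\subset B(H)$, so \emph{a fortiori} for a von Neumann subalgebra). Hence the entire content of the corollary is the well-known equivalence, for a von Neumann algebra $A$, between WEP and injectivity. So I only need to verify this equivalence.

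For the easy direction, suppose $A$ is injective. Fix a faithful normal representation $A\subset B(H)$. By definition of injectivity there is a unital completely positive projection $T\colon B(H)\to A$. Since $A$ is a von Neumann algebra, $\ovl{A}^\sigma=A$, and thus $T$ is precisely a weak expectation in the sense of Lance (see Remark \ref{lanc}). Since this holds for the faithful representation chosen, $A$ has the WEP.

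For the converse direction, assume $A$ has WEP. I will apply Kirchberg's characterization recalled in the introduction: if $A\subset B(H)$ and $A$ has WEP, then every $*$-homomorphism $u\colon A\to M$ into a von Neumann algebra $M$ extends to a contractive c.p.\ map $\tilde u\colon B(H)\to M$. I take $M=A$ (which is a von Neumann algebra, by hypothesis) and $u=\mathrm{id}_A$. The resulting map $\tilde u\colon B(H)\to A$ is a unital c.p.\ projection onto $A$, so $A$ is injective. The main obstacle, if any, is simply the conceptual one of keeping track of which characterization of WEP one uses: the Kirchberg extension form makes this direction immediate, whereas if one started from the tensor-product definition one would have to go through Theorem \ref{kk1}(ii) and then use that, for a von Neumann algebra, $A$ is a direct summand of $A^{**}$ (via the central projection corresponding to the identity representation) to descend a projection $B(H)^{**}\to A^{**}$ to a projection $B(H)\to A$. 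Either route works, but the first is cleaner.
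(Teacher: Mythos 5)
Your proposal is correct and follows essentially the same route as the paper, which proves the corollary in one line by combining Corollary \ref{x1} (assertion (i) is equivalent to the WEP) with the standard fact that WEP and injectivity coincide for von Neumann algebras. You simply supply more detail than the paper does for that last equivalence (via Kirchberg's extension characterization in one direction and Lance's weak expectation in the other), and that filling-in is sound.
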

 
 \begin{rem}
  Although we are left guessing what his argument was to prove Theorem \ref{x5},
  the results of \S \ref{ctwwdm} 
  seem very likely to be close to what Haagerup had in mind.
  Note that the question whether Corollary \ref{x1} holds
  is implicit in Haagerup's previous fundamental (published) paper
  \cite{Ha}, where he proves Corollary \ref{ro10}
  and then
    asks explicitly
  whether for a von Neumann algebra $M$ the isometric identity $D(\ell_\infty^3,M)=CB(\ell_\infty^3,M)$
  implies its injectivity.
  In other words he asks whether (i) in Corollary \ref{x1}
  with $n=3$  suffices to imply the same for all $n$. This is still open,
  but it holds if $M\subset B(H)$ is cyclic and  $M'$ generated by a pair of unitaries (see Remark \ref{ro18}).
  As observed by Junge and Le Merdy in \cite{JLM}
  it also holds if the equality $D(\ell_\infty^3,M)=CB(\ell_\infty^3,M)$
  is meant in the completely isometric sense,
  i.e.
  one assumes the same isometric identity for $M_n(M)$ instead of $M$ 
  for all $n\ge 1$.
  This follows from 
  the same  idea (from \cite[Th. 1]{P}) used in Remark \ref{x2}, but applied
  with $C^*(\F_2)$ in place of $\mathscr C$. \end{rem}

\end{document}